\newtheorem{theorem}{Theorem}[section]
\newtheorem{lemma}[theorem]{Lemma}
\newtheorem{note}[theorem]{Note}
\newtheorem{prop}[theorem]{Proposition}
\newtheorem{cor}[theorem]{Corollary}
\newtheorem*{Theorem1'}{Theorem 1'}
\theoremstyle{definition}
\theoremstyle{remark}
\numberwithin{equation}{section}
\newcommand \g{{\mathfrak g}}
\newcommand \h{{\mathfrak h}}
\newcommand  \s{{\mathfrak s}}
\renewcommand \r{{\mathfrak r}}
\renewcommand \a{{\mathfrak a}}
\newcommand \e{{\mathfrak e}}
\def \n{{\mathfrak n}}
\newcommand \E{{\mathcal E}}
\newcommand \ad{{\mathrm{ad}}}
\newcommand \Hom{{\mathrm {Hom}}}
\newcommand \End{{\mathrm {End}}}
\newcommand \socl{{\mathrm {soc}}}
\newcommand \gl{{\mathfrak {gl}}}
\renewcommand \sl{{\mathfrak {sl}}}
\newcommand \so{{\mathfrak {so}}}
\newcommand \sy{{\mathfrak {sp}}}
\newcommand \C{{\mathbb C}}
\newcommand \Z{{\mathbb Z}}
\newcommand \Ze{{\mathcal Z}}
\newcommand \B{{\mathcal B}}
\newcommand \al{{\alpha}}
\newcommand \be{{\beta}}
\newcommand \ga{{\gamma}}
\newcommand \si{{\sigma}}
\newcommand \chr{{\mathrm {char}}}
\newcommand \NA{{\mathbf A}}
\newcommand \NE{{\mathbf E}}
\newcommand \NT{{\mathbf T}}
\begin{document}

\title[Classification of indecomposable modules over solvable Lie algebras ] {\small Classification of linked indecomposable modules
of a family of solvable Lie algebras over an arbitrary field of
characteristic 0}

\author{Leandro Cagliero}
\address{CIEM-CONICET, FAMAF-Universidad Nacional de C\'ordoba, C\'ordoba, Argentina.}
\email{cagliero@famaf.unc.edu.ar}
\thanks{The first author was supported in part by CONICET and SECYT-UNC grants.}

\author{Fernando Szechtman}
\address{Department of Mathematics and Statistics, University of Regina, Canada}
\email{fernando.szechtman@gmail.com}
\thanks{The second author was supported in part by an NSERC discovery grant}

\subjclass[2000]{17B10}



\keywords{Indecomposable module; Uniserial module; Lie algebra}

\begin{abstract} Let $\g$ be a finite dimensional Lie
algebra over a field of characteristic 0, with solvable radical
$\r$ and nilpotent radical $\n=[\g,\r]$. Given a finite
dimensional $\g$-module $U$, its nilpotency series
$0\subset U^{(1)}\subset\cdots\subset U^{(m)}=U$ is defined so that
$U^{(1)}$ is the 0-weight space of $\n$ in $U$, $U^{(2)}/U^{(1)}$
is the 0-weight space of $\n$ in $U/U^{(1)}$, and so on. 
We say
that $U$ is linked if each factor of its nilpotency series is a
uniserial $\g/\n$-module, i.e., its $\g/\n$-submodules form a
chain. Every uniserial $\g$-module is linked, every linked
$\g$-module is indecomposable with irreducible socle, and both
converses fail.

In this paper we classify all linked $\g$-modules when $\g=\langle
x\rangle\ltimes \a$ and $\ad\, x$ acts
diagonalizably on the abelian Lie algebra $\a$. Moreover, we
identify and classify all uniserial $\g$-modules amongst them.
\end{abstract}

\maketitle

\section{Introduction}

We fix throughout an arbitrary field $F$. All vector spaces,
including all algebras, associative or Lie, as well as their
modules, are assumed to be finite dimensional over $F$. The only
exception to this rule is our use of polynomial algebras, although
all of their modules are still assumed to be finite dimensional.

A utopian ideal of the representation theory of Lie algebras is to
classify all indecomposable modules up to isomorphism. This is
exceedingly difficult to achieve, even for a Lie algebra as
elementary as the 2-dimensional abelian Lie algebra, as indicated
in the classical reference \cite{GP}.

For complex semisimple Lie algebras there is a completely
satisfactory answer. Thanks to Weyl's theorem, the building blocks
of the representation theory are the irreducible modules, which
can be classified by their highest weights (see \cite{H}).

For an arbitrary complex Lie algebra, the atoms of representation
theory are the indecomposable modules, and there is a large number
of recent articles devoted to their study, specifically to
classify the imbeddings of a given complex Lie algebra~$\g$ into a
given semismple Lie algebra $\s$, and to use this information to
produce and classify indecomposable $\g$-modules obtained by
restricting irreducible $\s$-modules to~$\g$. The origin of these
papers can be traced to an article by Repka and de Guise
\cite{RG}, where they devise a graphical method to furnish
indecomposable modules of the complexified Euclidean Lie
algebra $\e(2)$, which they use to find the decomposition of the tensor
product of some indecomposable $\e(2)$-modules.

The graphical approach of \cite{RG} was taken up by Douglas and
Premat \cite{DP}, who imbedded $\e(2)$ into $\sl(3)$ and produced
indecomposable $\e(2)$-modules by restricting irreducible
$\sl(3)$-modules. This was generalized by Premat \cite{Pr}, who
imbedded $\e(2)$ into every simple Lie algebra $\s$ of rank 2 and
showed that an irreducible $\s$-module restricts to an
indecomposable $\e(2)$-module. At the same time, it was shown by
Savage \cite{S} that $\e(2)$ is of wild representation type, which
somewhat explained the very large families of indecomposable
$\e(2)$-modules being produced. An extension of Savage's result
was given by Makedonskyi \cite{M} to virtually every Lie algebra
that is not semisimple or 1-dimensional.

The work of Douglas and Premat \cite{DP} inspired a paper by
Casati, Minniti and Salari \cite{CMS} on indecomposable
representations of the Diamond Lie algebra~${\mathcal{D}}$, a
4-dimensional subalgebra of the semidirect product $\sl(2)\ltimes
\h(1)$, where $\h(1)$ stands for 3-dimensional Heisenberg algebra.
They imbed ${\mathcal{D}}$ in $\sl(3)$, $\sy(4)$ and the truncated
Lie algebra $\sl(2)\ltimes \C[t]/(t^2)$ to provide various
families of indecomposable ${\mathcal{D}}$-modules. Like results
for ${\mathcal{D}}$ can be found in \cite{DJR}.

A study by Jakobsen of indecomposable representations of the
10-dimensional Poincar$\mathrm{\acute{e}}$ group, the group of
isometries of Minkowski spacetime, and its Lie algebra, with
references to applications in mathematical physics, can be found
in \cite{J}. Imbeddings of the Poincar$\mathrm{\acute{e}}$ algebra
into simple Lie algebras of rank 3 are given in \cite{DGR}, which
uses some of these imbeddings to produce families of
indecomposable modules for the Poincar$\mathrm{\acute{e}}$
algebra.

In addition to the above, many other families of indecomposable
modules for non-semisimple Lie algebras $\g$ have been produced by
imbedding them into suitable semisimple Lie algebras and
restricting irreducible modules of the latter to the former. See
\cite{DG}, \cite{DR}, \cite{DR2} and \cite{DR2} when $\g$ is the
complexified Euclidean Lie algebra $\e(3)$, \cite{C} when $\g$ is abelian, and
\cite{DKR} when $\g=\so(2n)\ltimes V$, where $V$ is a specially
chosen irreducible $\so(2n)$-module.

Rather than attempting to classify all indecomposable modules for
a given Lie algebra, or family of Lie algebras, we propose to
restrict the classification to certain types of structurally
defined indecomposable modules. The uniserial modules, namely
those possessing a unique composition series, seem to be
well-suited for this purpose. Our first contribution in this
regard is \cite{CS}. Much of \cite{CS} is concerned with the
construction of uniserial $\g$-modules when $\g=\s\ltimes V$,
where $\s$ is a complex semisimple Lie algebra and $V$ is a
non-trivial irreducible $\s$-module. In the special case
$\g=\s\ltimes \s_{\mathrm{ad}}$, \cite{CS} produces the
Kirillov-Reshetikhin modules from \cite{CM}, which are the subject
of active current interest (see \cite{BCG} and references
therein). Another significant part of \cite{CS} is devoted to
demonstrate that the problem of finding all uniserial
$\sl(2)\ltimes V(m)$-modules, $m\geq 1$, is equivalent to
determining all non-trivial zeros of the Racah-Wigner $6j$-symbol
within certain parameters. Actually locating these zeros, a
problem that in general is notoriously difficult (see \cite{R} and
\cite{L}), allowed us to conclude that no other uniserial
$\sl(2)\ltimes V(m)$-modules existed besides the ones we had
previously constructed.

A thorough analysis of the indecomposable modules of the complex
Lie algebra $\g=\sl(2)\ltimes V(1)$ was carried out by Piard
\cite{P}. In particular, he classifies all $\g$-modules $U$ such
that $U/\mathrm{rad}(U)$ is irreducible. The class of these
modules, which he called ``cyclic", is far greater than the class
of uniserial modules, but very small in comparison with the class
of all indecomposable modules. We have verified \cite{CS3} that
all of Piard's ``cyclic" modules can be constructed by linking
uniserial $\g$-modules in a suitable way. The process can be best
described by means of Young diagrams.

Thus uniserial modules are not only of interest in their own
right, but they can also be used to construct, understand and
classify more general indecomposable modules. The present paper
gives further evidence in this direction.

Let $\g$ be a Lie algebra with solvable radical $\r$ and let $U$ be a $\g$-module.
We say that $U$ is admissible if $U$ is non-zero and $[\g,\r]$ acts on $U$ via nilpotent operators.
If $\chr(F)=0$ then $[\g,\r]$ acts trivially on every irreducible, i.e., every non-zero $\g$-module is admissible
(see \cite{B}, I.5.3, or Lemma \ref{nilradical} below)
and, following \cite{B}, we refer to $[\g,\r]=[\g,\g]\cap\r$ as the nilpotent radical of $\g$ (equality follows from
a Levi decomposition of $\g$). In prime characteristic $[\g,\r]$ need not act trivially on an irreducible $\g$-module
(see \cite{H}, \S 4.3, Exercise 3). Whatever the characteristic of $F$, let $\n=[\g,\r]$.

If $U$ is an admissible $\g$-module we define its nilpotency series
$$
0=U^{(0)}\subset U^{(1)}\subset\cdots\subset U^{(m)}=U,
$$
so that the $i$-th nilpotency factor
$U^{(i)}/U^{(i-1)}$ of $U$, $1\leq i\leq m$, is the 0-weight space for the action of $\n$
on $U/U^{(i-1)}$. We refer to $m$ as the nilpotency length of~$U$. Note that $U^{(i)}/U^{(i-1)}$
is a module over $\g/\n$.

We introduce the notion of linked $\g$-module, by which we mean an admissible $\g$-module
all of whose nilpotency factors are uniserial as modules over $\g/\n$. Observe that every
admissible uniserial $\g$-module is linked and every linked $\g$-module
is indecomposable with irreducible socle, but both converses
fail: the $\n$-module $F^3$ given by
$\n=\left\{\left(\begin{smallmatrix}
0 & 0 & x \\ 0 & 0 & y \\ 0 & 0 & 0
\end{smallmatrix}\right):x,y\in F\right\}$
is indecomposable but not linked;
see Theorem \ref{main} for linked modules that are not uniserial.

Note that if $F$ has characteristic 0 and $\g$ is perfect
then the solvable and nilpotent radicals of $\g$ coincide and, consequently,
every linked $\g$-module is uniserial. However, if $\chr(F)=0$ and $\g$ is arbitrary,
the class of linked modules is far greater than the class of uniserial modules,
which is vastly greater than that of irreducible ones. Thus,
the path from a linked module down to its irreducible constituents is, in general, complicated.
In spite of this, we are able to classify all linked modules over a family of 2-step
solvable Lie algebras in characteristic 0.

Indeed, the goals of this paper are to classify all linked $\g$-modules
when $\g=\langle x\rangle\ltimes \a$ and $\ad\, x$ acts
diagonalizably on the abelian Lie algebra $\a$, and to identify
and classify the uniserial $\g$-modules amongst them. These
classifications are achieved under the only assumption that $F$
have characteristic 0. Note that even the most favourable case when $F$ is algebraically closed
of characteristic 0, every such $\g$ is shown in \cite{M} to be of wild representation type.

The construction of a family of linked $\g$-modules of nilpotency length
$m>1$ is given in Theorem \ref{falta} under no restrictions on $F$
whatsoever, while the fact that any linked $\g$-module of nilpotency length
$m>1$ is isomorphic to one and only one member of our family,
provided $\chr(F)=0$, can be found in Theorem \ref{main}.

In general, if $\g$ is a solvable Lie algebra then $\n=[\g,\g]$,
so a linked $\g$-module of nilpotency length 1 is just a uniserial module for the abelian Lie
algebra $\g/\n$.

The uniserial modules of an abelian Lie algebra are studied in
great detail in \cite{CS2}. A classification is
only achieved under certain conditions on $F$, too technical to
mention here, but certainly satisfied by any field of
characteristic~0.

This paper is part of a project attempting to identify a type of indecomposable module,
as general as possible, which admits classification for all Lie algebras.
The results obtained in \cite{CS,CS2} provide evidence in favor of the class of uniserial modules.
The solvable Lie algebra $\g=\langle x\rangle\ltimes \a$ considered in this paper is not only
the starting point to study the class of linked modules, but it is also important to our project
as it appears as a Lie subalgebra of many other families of Lie algebras.

Throughout the entire paper we let $F$ have arbitrary characteristic,
with the only exception of Lemma \ref{nilradical}, which is only valid when $\chr(F)=0$.
Our use of admissible modules allows us to treat the prime and zero characteristic cases
simultaneously, although we only obtain partial results in the former case.
Further information on the prime characteristic case can be found in \cite{CS4}.




\section{Indecomposable Lie algebras of transformations}
\label{trans}

Consider the group homomorphism $\si:F^+\to\mathrm{Aut}(F[X])$ given by
\begin{equation}
\label{actf}
g^{\si(\ga)}(X)=g(X+\ga),\quad \ga\in F,\, g\in F[X].
\end{equation}
Let $p_1\in F[X]$ be a monic irreducible polynomial of degree $d$. Given $m>1$, define $p_2,\dots,p_m\in F[X]$ by
\begin{equation}
\label{jio}
p_i(X)=p^{\si(i-1)}(X)=p_1(X+(i-1)),\quad 1\leq i\leq m.
\end{equation}
Let $s_1\geq\cdots\geq s_m$ be a decreasing sequence of natural numbers. We use these data to construct a torsion $F[X]$-module
\begin{equation}
\label{ww1}
U=F[X]u_1\oplus\cdots\oplus F[X]u_m=U_1\oplus\cdots\oplus U_m,
\end{equation}
where
\begin{equation}
\label{ww2}
\mathrm{Ann}_{F[X]}(u_i)=\{f\in F[X]|f(X)u_i=0\}=(p_i^{s_i}).
\end{equation}
Note that
$$
\dim_F U=t=d(s_1+\cdots+s_m).
$$
Given $\ga\in F$, we use the notation $U^{\sigma(\ga)}$ when viewing $U$ as an
$F[X]$-module via
$$
g(X)\cdot u=g^{\si(\ga)}(X)u=g(X+\ga)u.
$$
Note that
$$
\mathrm{\End}_{F[X]}(U)=\mathrm{\End}_{F[X]}(U^{\sigma(\ga)}),\quad
\ga\in F.
$$
Let $A\in \End_F(U)$ be multiplication by $X$, i.e.,
\begin{equation}
\label{ww3}
Au=Xu,\quad u\in U,
\end{equation}
and let $T\in\mathrm{\Hom}_{F[X]}(U^{\si(1)},U)$ be defined by
\begin{equation}
\label{defTT} Tu_1=0,\quad Tu_i=p_{i-1}(X)^{s_{i-1}-s_i}u_{i-1},\quad
2\leq i\leq m.
\end{equation}
Thus $T \in \End_F(U)$ satisfies
\begin{equation}
\label{calct}
Tg(X+1)u=g(X)Tu,\quad g\in F[X], u\in U.
\end{equation}
In particular,
$$
T(A+I)=AT,
$$
which means
\begin{equation}
\label{relat0}
 [A,T]=T
\end{equation}
and yields
\begin{equation}
\label{relat} [A,T^j]=jT^j,\quad j\geq 0.
\end{equation}
Let $E\in \mathrm{End}_{F[X]}(U)$ be given by
\begin{equation}
\label{ww4}
Eu_i=(X+(i-1))u_i,\quad 1\leq i\leq m.
\end{equation}
We see that $$TE=ET,\quad AE=EA.$$
Note that the subalgebra
$$
\E=F[E]=\{g(E)\,|\, g\in F[X]\},
$$
of $\mathrm{End}_F(U)$ is isomorphic to $F[X]/(p^{s_1})$.

It will be convenient to have a matrix form of the operators
$A,E,T$ of $U$. Given a monic polynomial
$f=f_0+f_1X+\cdots+f_{n-1}X^{n-1}+X^n\in F[X]$, its companion
matrix $C_f\in M_n(F)$ is defined by
$$
C_f=\left(%
\begin{array}{ccccc}
  0 & 0 & \cdots & 0 & -f_0 \\
  1 & 0 & \cdots & 0 & -f_1 \\
  0 & 1 & \cdots & 0 & -f_2 \\
  \vdots & \vdots & \cdots & \vdots & \vdots \\
  0 & 0 & \cdots & 1 & -f_{n-1} \\
\end{array}%
\right).
$$
We will use the abbreviated notation
$$
C_i=C_{p_i^{s_i}}\in M_{ds_i}(F),\quad 1\leq i\leq m.
$$
Let $\B_i$ be the $F$-basis of $U_i$ consisting of all $X^j u_i$
with $0\leq j<ds_i$. Then $\B=\B_1\cup\cdots\cup \B_m$ is an
$F$-basis of $U$ relative to which $A$ and $E$ are respectively
represented by
\begin{equation}
\label{defa}
\NA=\left(
             \begin{array}{cccc}
               C_1 & 0 & \dots & 0 \\
               0 & C_2 & \ddots & \vdots \\
               \vdots & \ddots & \ddots & 0 \\
               0 & \dots & 0 & C_m \\
             \end{array}
           \right)\in M_t(F)
\end{equation}
and
\begin{equation}
\label{defeg}
\NE=\left(
             \begin{array}{cccc}
               C_1 & 0 & \dots & 0 \\
               0 & C_2+I & \ddots & \vdots \\
               \vdots & \ddots & \ddots & 0 \\
               0 & \dots & 0 & C_m+(m-1)I \\
             \end{array}
           \right)\in M_t(F).
\end{equation}
Let $\NT\in M_t(F)$ be the matrix representing $T$ relative to
$\B$. Then by (\ref{defTT})
\begin{equation}
\label{deft} \NT=\left(
    \begin{array}{ccccc}
      0 & B_2 & 0 & \dots & 0 \\
      0 & 0 & B_3 & \ddots & \vdots\\
      \vdots & \ddots & \ddots & \ddots & 0 \\
      \vdots &  & \ddots & \ddots & B_{m}\\
      0 & \dots & \dots & 0 & 0 \\
    \end{array}
  \right),
\end{equation}
where
$$
B_i\in M_{ds_{i-1},ds_i}(F),\quad 2\leq i\leq m.
$$
From (\ref{relat0}) we obtain
\begin{equation}
\label{bsat} B_i(C_i+I)=C_{i-1}B_i,\quad 2\leq i\leq m.
\end{equation}
Moreover, (\ref{defTT}) implies that the $F[X]$-homomorphisms
$U_i^{\si(1)}\to U_{i-1}$, $2\leq i\leq m$, induced by $T$ are
injective, so each $B_i$ has nullity 0.

\begin{lemma}\label{gam} Given $\ga\in F$, a monic polyomial $f\in F[X]$ of degree $d\geq 1$, and integers $n\ge r\ge 1$,
let $S(\gamma,f,n,r)\in M_{dn,dr}(F)$ be the matrix defined by
\[
 S(\gamma,f,n,r)_{i,j}=\text{the coeff of $X^{i-1}$ in } (X+\gamma)^{j-1}f(X)^{n-r},\quad 1\le i\le dn,\; 1\le j\le dr,
\]
noting that $S(\ga,f,n,n)$ is the $dn\times dn$ Pascal matrix
$$
S(\ga,f,n,n)=\left(
    \begin{array}{cccccc}
      1 & \ga & \ga^2 &  \ga^3 & \dots &   \ga^{dn-1} \\
      0 & 1 & 2\ga & 3 \ga^2 & \dots & {{dn-1}\choose{1}}\ga^{dn-2}\\
       0 & 0 & 1 & 3 \ga & \dots & {{dn-1}\choose{2}}\ga^{dn-3}\\
        0 & 0 & 0 & 1 & \dots & {{dn-1}\choose{3}}\ga^{dn-4}\\
        \vdots & \vdots & \vdots &  & \ddots & \vdots\\
      0 & 0 & 0 & \dots & \dots & 1 \\
    \end{array}
  \right).
$$
Then
\begin{equation}
\label{abc}
 S(-\ga,f,n,r)(C_{f(X+\gamma)^r}+\gamma I)=C_{f(X)^n}S(-\ga,f,n,r).
\end{equation}
\end{lemma}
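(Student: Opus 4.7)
The plan is to reinterpret both sides of (\ref{abc}) as matrix representations of a single intertwining map between two cyclic torsion $F[X]$-modules. Consider $W=F[X]/(f(X+\gamma)^r)$ and $V=F[X]/(f(X)^n)$, each equipped with the canonical basis of powers of $X$: $\{1,X,\dots,X^{dr-1}\}$ for $W$ and $\{1,X,\dots,X^{dn-1}\}$ for $V$. In these bases, multiplication by $X$ on $V$ is represented by $C_{f(X)^n}$, while multiplication by $X$ on $W$ is represented by $C_{f(X+\gamma)^r}$; therefore multiplication by $X+\gamma$ on $W$ is represented by $C_{f(X+\gamma)^r}+\gamma I$. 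So (\ref{abc}) is the matrix form of an intertwiner $W\to V$ between multiplication by $X+\gamma$ on $W$ and multiplication by $X$ on $V$.

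First, I would define the candidate intertwiner $\phi:W\to V$ by
\[
\phi\bigl(g(X)+(f(X+\gamma)^r)\bigr)=g(X-\gamma)\,f(X)^{n-r}+(f(X)^n).
\]
Well-definedness is immediate: if $g(X)=h(X)f(X+\gamma)^r$, then $g(X-\gamma)f(X)^{n-r}=h(X-\gamma)f(X)^n\in(f(X)^n)$. The intertwining property
\[
\phi\bigl((X+\gamma)g(X)\bigr)=((X-\gamma)+\gamma)\,g(X-\gamma)f(X)^{n-r}=X\,\phi(g)
\]
is a direct substitution.

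Next, I would read off the matrix of $\phi$ in the canonical bases. Since
\[
\phi(X^{j-1})=(X-\gamma)^{j-1}f(X)^{n-r}
\]
has degree at most $(dr-1)+d(n-r)=dn-1$, no reduction modulo $f(X)^n$ is needed, and its coordinate vector in the basis $\{1,X,\dots,X^{dn-1}\}$ of $V$ has $i$-th entry equal to the coefficient of $X^{i-1}$ in $(X-\gamma)^{j-1}f(X)^{n-r}$. By the definition of $S$, this is exactly $S(-\gamma,f,n,r)_{i,j}$. Thus $S(-\gamma,f,n,r)$ is the matrix of $\phi$, and the intertwining identity $\phi\circ(X+\gamma)_W=X_V\circ\phi$ translates directly into (\ref{abc}).

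The only real subtlety is spotting the correct $\phi$; once written down, well-definedness, the intertwining property, and the matrix identification are mechanical. The degree bound above is what makes the bases match cleanly, and it is why $S(-\gamma,f,n,r)$ is rectangular of size $dn\times dr$ rather than square. A direct column-by-column verification of (\ref{abc}) is possible but obscures the underlying reason; the module-theoretic translation I have just described makes the identity transparent and anticipates how $S(-\gamma,f,n,r)$ will later serve as a change-of-basis between blocks in the constructions using $A$, $E$, and $T$.
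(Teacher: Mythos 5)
Your proof is correct and is essentially the paper's own argument: the paper defines the same map (called $R$ there) from $F[X]/(f(X+\gamma)^r)$ to $F[X]/(f(X)^n)$, observes it intertwines multiplication by $X+\gamma$ with multiplication by $X$, and reads off (\ref{abc}) from the matrices in the canonical power bases. You simply spell out the well-definedness, the intertwining computation, and the degree bound that the paper leaves implicit.
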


\begin{proof} Consider the vector spaces $V=F[X]/(f(X)^n)$ and
$W=F[X]/(f(X+\ga)^r)$.

Let $P$ (resp. $Q$) be the $F$-endomorphism
of $V$ (resp. $W$) given by multiplication by $X+(f(X)^n)$ (resp. by
$X+\ga+(f(X+\ga)^r)$),
and let $R:W\to V$ be the $F$-linear map defined by
$$
R:g(X)+(f(X+\ga)^r)\mapsto g(X-\ga)f(X)^{n-r}+(f(X)^n).
$$
Then the following diagram is commutative
\begin{equation}
\label{rpq}
\begin{matrix}
 F[X]/(f(X+\ga)^r)                        & \xrightarrow{\quad\displaystyle R\quad } &  F[X]/(f(X)^n) \\[2mm]
 Q\left\downarrow\rule{0cm}{.6cm}\right.  &                                          &   \left\downarrow\rule{0cm}{.6cm}\right.P   \\[3mm]
 F[X]/(f(X+\ga)^r)                        & \xrightarrow{\quad\displaystyle R\quad } &  F[X]/(f(X)^n) \\
 \end{matrix}
\end{equation}

Let $\B_V$ (resp. $\B_W$) denote the basis of $V$ (resp. $W$) obtained by projecting
the first $dn$ (resp. $dr$) elements of the canonical basis
$\{1,X,X^2,\dots\}$
of $F[X]$.

The matrix of $Q$ relative to $\B_W$ is $C_{f(X+\gamma)}+\gamma I$,
that of $P$ relative to $\B_V$ is $C_{f(X)}$, and that of $R$
relative to  $\B_V,\B_W$  is $S(-\ga,f,n,r)$. It
now follows from (\ref{rpq}) that (\ref{abc}) holds.
\end{proof}

We infer from Lemma \ref{gam} that if $B_2,\dots,B_m$ are as in (\ref{deft}) then
$$
B_i=S(-1,p_{i-1}, s_{i-1},s_i),\quad 2\leq i\leq m.
$$

\begin{lemma}\label{31} Let $1\leq j< m$. Then the linear epimorphism $F[X]\to T^j\E$ given by
$$
g(X)\mapsto T^jg(E)
$$
has kernel $(p^{s_{j+1}})$. In particular, $\dim_F T^j\E=ds_{j+1}$. Moreover, $T^m=0$.
\end{lemma}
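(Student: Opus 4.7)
The plan is to pin down the kernel of the map $g(X) \mapsto T^j g(E)$ by evaluating $T^j g(E)$ on each generator $u_i$ and reading off the vanishing condition. The key ingredients will be the intertwining rule (\ref{calct}), the commutation $TE = ET$ noted after (\ref{ww4}), the annihilator formula (\ref{ww2}), and the shift $p_\ell(X-1) = p_{\ell-1}(X)$ built into (\ref{jio}).

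The first step I would carry out is a short induction on $j$ showing
$$T^j u_i \;=\; p_{i-j}(X)^{s_{i-j} - s_i}\, u_{i-j}\quad\text{for } i > j, \qquad T^j u_i \;=\; 0 \quad\text{for } i \le j.$$
The base case $j=1$ is exactly (\ref{defTT}); for the inductive step, I would rewrite (\ref{calct}) in the reverse form $T(h(X) u) = h(X-1) Tu$, apply it to $T\bigl(T^{j-1} u_i\bigr)$, and invoke $p_{i-j+1}(X-1) = p_{i-j}(X)$.

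Since $T$ and $E$ commute, $g(E)$ commutes with $T^j$, so for $i > j$
$$T^j g(E) u_i \;=\; g(E)\, T^j u_i \;=\; p_{i-j}(X)^{s_{i-j}-s_i}\, g\bigl(X + i - j - 1\bigr)\, u_{i-j},$$
while $T^j g(E) u_i = 0$ automatically for $i \le j$. Using $\mathrm{Ann}_{F[X]}(u_{i-j}) = (p_{i-j}^{s_{i-j}})$, vanishing on $u_i$ becomes $g(X + i - j - 1) \in (p_{i-j}(X)^{s_i})$; the substitution $Y = X + i - j - 1$ together with $p_{i-j}(X) = p(Y)$ collapses this to $g \in (p^{s_i})$. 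Intersecting these conditions over $i = j+1,\dots,m$ and invoking $s_{j+1} \ge \cdots \ge s_m$ identifies the kernel as $(p^{s_{j+1}})$; the first isomorphism theorem then delivers $\dim_F T^j\E = \deg p^{s_{j+1}} = d\, s_{j+1}$.

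Finally, the inductive formula gives $T^m u_i = 0$ for every $1 \le i \le m$, and since $U = \sum_i F[X] u_i$ and $T^m\bigl(h(X) u_i\bigr) = h(X - m)\, T^m u_i = 0$ (by iterating the reverse intertwining rule), we conclude $T^m = 0$ on all of $U$. The only place that will need attention is the index bookkeeping in the induction and the subsequent change of variable; no real obstruction arises beyond that.
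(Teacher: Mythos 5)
Your proposal is correct and follows essentially the same route as the paper: compute $T^j$ on the generators $u_i$ via the intertwining rule (\ref{calct}) and the shift $p_{i}(X-1)=p_{i-1}(X)$, then read off the divisibility condition $p^{s_i}\mid g$ from $\mathrm{Ann}_{F[X]}(u_{i-j})=(p_{i-j}^{s_{i-j}})$, with the ordering $s_{j+1}\ge\cdots\ge s_m$ collapsing the intersection to $(p^{s_{j+1}})$. The only cosmetic difference is that the paper gets the inclusion $\ker\subseteq(p^{s_{j+1}})$ by testing on $u_{j+1}$ alone and you intersect over all $i$; the content is identical, and your treatment of $T^m=0$ on general elements $h(X)u_i$ is likewise fine.
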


\begin{proof} Use (\ref{defTT}) and (\ref{calct}) to calculate
$$
T^j u_1=\dots=T^j u_j=0,
$$
which is also valid when $j=m$, and
$$
T^j u_{j+k}=p_k(X)^{s_k-s_{j+k}}u_k,\quad 1\leq k\leq m-j.
$$
Set $p=p_1$ and suppose first that $T^j g(E)=0$. Then
$$
0=T^jg(E)u_{j+1}=g(X)p(X)^{s_1-s_{j+1}}u_1,
$$
so $p^{s_{j+1}}|g$. Conversely, if $p^{s_{j+1}}|g$ then $p^{s_{j+k}}|g$, so
$p(X+k-1)^{s_{j+k}}=p_k(X)^{s_{j+k}}$ divides $g(X+k-1)$ for $1\leq k\leq m-j$, whence
$$
0=p_k(X)^{s_k-s_{j+k}}g(X+k-1)u_k=T^j g(E)u_{j+k},\quad 1\leq k\leq m-j,
$$
that is, $T^j g(E)=0$.
\end{proof}

\begin{lemma}\label{33} The subspaces $\langle A\rangle,\E,T\E,\dots,T^{m-1}\E$ of $\End_F(U)$ are in direct sum.
\end{lemma}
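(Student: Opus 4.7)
The plan is to exploit the direct sum decomposition $U=U_1\oplus\cdots\oplus U_m$ and track how each operator interacts with it. Both $A$ and every $g(E)\in\E$ preserve each summand $U_i$, acting as multiplication by $X$ and by $g(X+i-1)$ respectively; whereas for $j\geq 1$, combining the commutation $T^jg(E)=g(E)T^j$ with the formula $T^ju_i=p_{i-j}(X)^{s_{i-j}-s_i}u_{i-j}$ (which is $0$ when $i\leq j$) shows that $T^jg_j(E)$ maps $U_i$ into $U_{i-j}$. Hence, assuming a vanishing linear combination
$$
L=cA+g_0(E)+\sum_{j=1}^{m-1}T^jg_j(E)=0
$$
and applying it to each $u_i$, the direct sum decomposition separates $Lu_i=0$ into one $U_i$-condition (involving only the first two terms) and, for each $1\leq j<i$, one $U_{i-j}$-condition involving only $T^jg_j(E)u_i$.

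The only delicate step is showing $c=0$, i.e.\ that $\langle A\rangle\cap\E=0$. The $U_i$-condition reads $cX+g_0(X+i-1)\equiv 0\pmod{p(X+i-1)^{s_i}}$. The substitution $Y=X+i-1$ rewrites this uniformly in $i$ as
$$
g_0(Y)+cY\equiv c(i-1)\pmod{p(Y)^{s_i}}.
$$
I would then compare the congruences at $i$ and $i+1$ (valid since $m\geq 2$), reducing both modulo the common modulus $p(Y)^{s_{i+1}}$ (using $s_i\geq s_{i+1}$). Subtracting cancels the left-hand sides and leaves $c\equiv 0\pmod{p(Y)^{s_{i+1}}}$; since $c\in F$ is a scalar while $p^{s_{i+1}}$ has positive degree (as $s_{i+1}\geq 1$ and $d\geq 1$), this forces $c=0$.

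Once $c=0$, the $U_i$-condition becomes $g_0(Y)\in(p(Y)^{s_i})$ for every $i$; taking $i=1$ yields $g_0\in(p^{s_1})$, so $g_0(E)=0$ since $p^{s_1}$ annihilates $E$ on $U$. For the remaining terms, applying $\sum_{j\geq 1}T^jg_j(E)=0$ to each $u_i$ and reading off the $U_{i-j}$-components in the same fashion gives, after the substitution $Y=X+i-j-1$, the condition $g_j(Y)\in(p(Y)^{s_i})$ for every $i>j$. The strongest of these, at $i=j+1$, is $g_j\in(p^{s_{j+1}})$, which by Lemma~\ref{31} is equivalent to $T^jg_j(E)=0$. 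This finishes the argument.
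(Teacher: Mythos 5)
Your argument is correct and works over an arbitrary field, as it should since this section assumes nothing about $F$; but it takes a more computational route than the paper. The paper's proof has two ingredients: since $\E=F[E]$ centralizes $T$ while $[A,T]=T\neq 0$, one gets $A\notin\E$ and hence $\langle A\rangle\cap\E=0$; and in the matrix forms (\ref{defa})--(\ref{deft}) the summand $\langle A\rangle\oplus\E$ is block diagonal while $T^j\E$ is supported on the $j$-th block superdiagonal, so all the summands separate at once. Your splitting of $Lu_i$ into components along $U=U_1\oplus\cdots\oplus U_m$ is essentially that support argument carried out by hand on the cyclic generators; note that the component conditions only give $T^jg_j(E)u_i=0$ for each $i$, so your detour through the divisibility $g_j\in(p^{s_{j+1}})$ and Lemma \ref{31} is genuinely needed to conclude that the operator $T^jg_j(E)$ itself vanishes, and it does close that gap correctly (just as $g_0\in(p^{s_1})$ forces $g_0(E)=0$ because $\E\cong F[X]/(p^{s_1})$). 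The real divergence is the step $c=0$: instead of the one-line commutator observation you compare the congruences $g_0(Y)+cY\equiv c(i-1)\pmod{p(Y)^{s_i}}$ at consecutive indices $i$ and $i+1$, exploiting that $E$ acts on $U_i$ as $X+(i-1)$ while $A$ acts as $X$; this needs two blocks, i.e.\ $m\geq 2$, which is a standing hypothesis of the section and is in fact necessary (for $m=1$ one would have $A=E$). Each approach buys something: the paper's is shorter and makes the $\ad A$-eigenvalue grading transparent, while yours re-derives everything directly from the $F[X]$-module structure without invoking the matrix pictures.
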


\begin{proof} From $[A,T]=T$ we see that $A\notin\E$. The result now follows immediately
from the matrix representations of the $A,E$ and $T$.
\end{proof}
It follows from  (\ref{relat}), Lemma \ref{31} and Lemma \ref{33} that
$$\g=\langle A\rangle\ltimes F[E,T]=\langle A\rangle\ltimes(\E\oplus T\E\oplus\cdots\oplus T^{m-1}\E)$$
is a subalgebra of $\gl(U)$ of dimension $\dim_F(U)+1$ with 2-dimensional subalgebra
$$\s=\langle A\rangle\ltimes\langle T\rangle.$$

\begin{prop}\label{38} Let $\h$ be any subalgebra of $\g$ containing $\s$. Then
$U$ is a linked $\h$-module with nilpotency length $m$ and
nilpotency factors isomorphic to $U_1,\dots,U_m$ as
$F[X]$-modules.
 \end{prop}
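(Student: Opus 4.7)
The plan is to show the filtration $V_i:=U_1\oplus\cdots\oplus U_i$ coincides with the nilpotency series of $U$ as an $\h$-module, and that each factor $V_i/V_{i-1}\cong U_i$ is uniserial over $\h/\n_\h$.

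First I would establish admissibility. Because $F[E,T]$ is abelian, $\g=\langle A\rangle\ltimes F[E,T]$ is solvable of derived length~$2$, and so is $\h$. Since $A\in\s\subseteq\h$, we have the vector space decomposition $\h=\langle A\rangle\oplus\h_0$ with $\h_0=\h\cap F[E,T]$; combined with $[A,E]=0$ and relation (\ref{relat}), this gives $\n_\h=[\h,\h]=[A,\h_0]\subseteq T\E\oplus\cdots\oplus T^{m-1}\E$, every element of which is nilpotent because $T^m=0$ by Lemma~\ref{31}. The $\h$-stability of each $V_i$ is then immediate: $A$ and $E$ preserve each $U_j$, while $T$ sends $U_j$ into $U_{j-1}$.

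The crux is to prove $U^{(i)}=V_i$ by induction on $i$. For $i=1$: since $TU_1=0$ and $g(E)$ preserves $U_1$ (as $E$ acts on $U_1$ by multiplication by $X$), all operators in $\n_\h\subseteq TF[E,T]$ annihilate $U_1$, giving $U_1\subseteq U^{(1)}$. Conversely, any $u\in U^{(1)}$ satisfies $Tu=0$; writing $u=\sum g_j(X)u_j$ and using (\ref{calct}) in the form $T(g(X)u_j)=g(X-1)\,p_{j-1}(X)^{s_{j-1}-s_j}\,u_{j-1}$, together with the shift identity $p_{j-1}(X+1)=p_j(X)$, the vanishing of each component forces $p_j^{s_j}\mid g_j$ for $j\ge 2$, hence $u\in U_1$. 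The inductive step applies the same computation to the quotient $\h$-module $U/V_{i-1}$, which has exactly the same structural form with the first $i-1$ summands removed. Each resulting factor $V_i/V_{i-1}\cong U_i\cong F[X]/(p_i^{s_i})$ is uniserial as an $F[X]$-module because $p_i$ is irreducible; and since on this factor $\h/\n_\h$ acts through the commuting multiplications by $X$ (via $A$) and by $X+(i-1)$ (via $E$), which span the same subalgebra of $F$-endomorphisms, every $\h/\n_\h$-submodule is $F[X]$-invariant, and uniseriality descends. Since $V_0\subsetneq\cdots\subsetneq V_m=U$, the nilpotency length is exactly $m$.

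The main obstacle is the precise identification $\ker T=U_1$ together with its quotient analogues. It rests on the shift identity $p_{j-1}(X+1)=p_j(X)$ combined with the twisted $F[X]$-linearity of $T$ expressed in (\ref{calct}); these are what force $T$ to be injective on each $U_j$ for $j\ge 2$ and what align the intrinsic nilpotency series with our chosen filtration.
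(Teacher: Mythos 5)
Your argument is correct, and it is exactly the verification the paper leaves to the reader: its proof of Proposition \ref{38} is the one-line ``this follows from the very definitions of $A,E,T$ and $U$'', which your identification $U^{(i)}=U_1\oplus\cdots\oplus U_i$ via $\ker T$ (using (\ref{calct}) and $p_{j-1}(X+1)=p_j(X)$), together with uniseriality of each factor $F[X]/(p_i^{s_i})$ under the action of $A$, spells out in full. So this is the same approach, just made explicit.
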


\begin{proof} This follows from the very definitions of $A,E,T$ and~$U$.
\end{proof}

We will use the notation $\g(p_1,\dots,p_m)$ for $\g$ when $s_1=\cdots=s_m=1$.

\begin{prop}\label{39} Let $\h$ be any subalgebra of $\g$ containing $\s$. Then $U$ is a uniserial $\h$-module
if and only if $s_1=\cdots=s_m=1$.
\end{prop}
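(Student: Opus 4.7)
The plan is to prove the two implications separately. For $(\Leftarrow)$, where all $s_i=1$, I will use a socle-based induction on $m$ to show every $\h$-submodule of $U$ lies in the chain $\{U^{(j)}\}_{j=0}^m$. For $(\Rightarrow)$, where some $s_i>1$, I will exhibit two incomparable $\g$-submodules of $U$, which are automatically $\h$-submodules for any $\h\supseteq\s$.

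For $(\Leftarrow)$, when $s_i=1$ each nilpotency factor $U^{(i)}/U^{(i-1)}\cong U_i\cong F[X]/(p_i)$ is irreducible as an $F[X]$-module, and hence as an $\h$-module (since $\h$ acts on $U_i$ only through $A$ and $E$, both essentially multiplications by $X$ up to a scalar shift). Using the shift identity $Tg(X)u=g(X-1)Tu$ (a reformulation of (\ref{calct})) and the relation $p_i(X-1)=p_{i-1}(X)$ coming from (\ref{jio}), a direct calculation yields $\ker T|_{U/U^{(k)}}=U^{(k+1)}/U^{(k)}$ for every $0\leq k<m$. Since $T\in\s\subseteq\h$ is nilpotent by Lemma~\ref{31}, any irreducible $\h$-submodule $\bar W$ of $U/U^{(k)}$ is killed by $T$ and therefore lies in this kernel; as $U^{(k+1)}/U^{(k)}$ is itself irreducible (because $s_{k+1}=1$), we conclude $\bar W=U^{(k+1)}/U^{(k)}$, so this is the simple socle of $U/U^{(k)}$. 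Standard induction on $m$ finishes the argument: any nonzero $V\subseteq U$ contains its simple socle $U^{(1)}$, and $V/U^{(1)}$ lies in the chain of submodules of $U/U^{(1)}$ by induction, forcing $V=U^{(j)}$ for some $j$.

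For $(\Rightarrow)$, assume some $s_i>1$; by monotonicity $s_1>1$. Define
$$W=\sum_{i=1}^{m}(p_i^{s_i-1})u_iF[X]\subseteq U,$$
so that $W\cap U_i=(p_i^{s_i-1})u_iF[X]$ is a $d$-dimensional $F[X]$-submodule of $U_i$, proper in $U_i$ exactly when $s_i>1$. The $A$- and $E$-stability of $W$ are immediate from the $F[X]$-stability of each $W\cap U_i$ (noting $E|_{U_i}=A|_{U_i}+(i-1)I$). For $T$-stability, the shift identity and $p_i(X-1)=p_{i-1}(X)$ together give
$$T\bigl((p_i^{s_i-1})u_iF[X]\bigr)\subseteq (p_{i-1}^{s_{i-1}-1})u_{i-1}F[X]=W\cap U_{i-1},$$
so $TW\subseteq W$; closure under $T^k\E$ then follows from $TE=ET$. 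Finally, $W$ and $U^{(m-1)}$ are incomparable: $p_m^{s_m-1}u_m\in W\cap U_m$ is nonzero and not in $U^{(m-1)}$, while $u_1\in U^{(m-1)}$ is not in $W$, because $s_1>1$ forces $W\cap U_1=(p_1^{s_1-1})u_1F[X]\subsetneq U_1$. Hence $U$ is not uniserial over any $\h$ containing $\s$.

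The main obstacle is verifying the $T$-stability of $W$ in the reverse direction; this, as well as the socle identification $\ker T|_{U/U^{(k)}}=U^{(k+1)}/U^{(k)}$ used in the forward direction, rests on the clean shift behavior $p_i(X-1)=p_{i-1}(X)$ encoded in the definition (\ref{jio}) of the $p_i$, together with the nilpotency of $T$ from Lemma~\ref{31}.
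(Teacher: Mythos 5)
Your proof is correct, and while the backward direction follows the paper's route, the forward direction takes a genuinely different one. For $s_1=\cdots=s_m=1$, the paper likewise identifies the socle ($\socl(U)=U_1$) and inducts on $m$, using that the algebra induced on $U/U_1$ is $\g(p_2,\dots,p_m)$; your explicit identification $\ker T|_{U/U^{(k)}}=U^{(k+1)}/U^{(k)}$ is just a more concrete version of the same socle-series argument, with the added benefit that it visibly uses only $A,T\in\s\subseteq\h$, so it is uniform in $\h$. For the converse, the paper shows the second socle layer of $U$ is not irreducible, since it contains the direct sum of the image of $\socl(U_2^{\sigma(1)})$ under $T$ and $\socl(U_1/\socl(U_1))$, the latter nonzero exactly because $s_1\geq 2$; you instead produce two explicit incomparable $\g$-submodules, namely $W=\bigoplus_i \socl_{F[X]}(U_i)$ (whose $T$-stability rests on the exponent bookkeeping $(s_i-1)+(s_{i-1}-s_i)=s_{i-1}-1$, which you verify) and $U^{(m-1)}$, and correctly observe that $\g$-submodules are $\h$-submodules for every $\h\subseteq\g$, so non-uniseriality descends to all $\h$. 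Both arguments are valid; yours is more constructive and makes the role of $\h$ explicit, while the paper's stays entirely within the socle framework and is shorter. One point you leave implicit in the backward direction: the assertion that an irreducible $\h$-submodule $\bar W$ is killed by the nilpotent operator $T$ needs the observation that $T\bar W$ (equivalently $\ker T\cap\bar W$) is again an $\h$-submodule, which follows from $[A,T]=T$ and the fact that $T$ commutes with $\E$ and with $T^j\E$; this is routine and comparable to what the paper itself leaves unsaid, but a sentence would make it airtight.
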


\begin{proof} Suppose first $s_1=\cdots=s_m=1$. It is then clear that $\socl(U)=U_1$. The subalgebra of $\gl(U/U_1)$
induced by $\g(p_1,\dots,p_m)$ is isomorphic to $\g(p_2,\dots,p_m)$. By induction, the socle series of $U/U_1$ is a composition series,
and hence so is that of~$U$, which means that $U$ is uniserial.

Suppose next that not all $s_i=1$. Then $s_1\geq 2$. Since the $F[X]$-homomorphism $U_2^{\sigma(1)}\to U_1$ induced by $T$ sends
$\socl(U_2^{\sigma(1)})$ to $\socl(U_1)$, it follows that
$$
((\socl(U_2^{\sigma(1)})\oplus \socl(U_1))/\socl(U_1))\oplus \socl(U_1/\socl(U_1))
$$
is contained in $\socl_\g(U/\socl_\g(U))$. Since $s_1\geq 2$ the second summand non-zero. Clearly so is the first,
so $U$ is not uniserial.
\end{proof}

\begin{prop}\label{310} Suppose that, if $F$ has prime characteristic $p$, then $p\geq m$
and the stabilizer of $p_1$ under the action (\ref{actf}) of $F^+$ is trivial.

Let $\Ze$ be the centralizer of $T$ in $\gl(U)$ and let ${\mathcal S}$ be the sum of all eigenspaces of $\ad\, A$ in $\gl(U)$.
Then
$$\Ze\cap {\mathcal S}=\E\oplus T\E\oplus\cdots\oplus T^{m-1}\E.$$
In particular, the $j$-eigenspace of $\ad\, A$ in $\Ze$ is $T^j\E$, $0\leq j<m$,
so the centralizer of $\s$ in $\gl(U)$ is equal to
$\E$.
\end{prop}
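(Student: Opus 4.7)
The plan is to decompose the intersection $\Ze \cap {\mathcal S}$ into $\ad\, A$-eigenspaces and identify each one. The inclusion $\bigoplus_{j=0}^{m-1} T^j\E \subseteq \Ze \cap {\mathcal S}$ is straightforward: $T$ commutes with itself and with every element of $\E$, so $T^j\E \subseteq \Ze$, while (\ref{relat}) combined with $[A,E]=0$ gives $[A, T^j g(E)] = j T^j g(E)$, placing $T^j\E$ inside the $j$-eigenspace of $\ad\, A$. The direct sum property and the dimensions $\dim T^j\E = d s_{j+1}$ are provided by Lemmas \ref{33} and \ref{31}.

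For the reverse inclusion I would first observe, via Jacobi, that $\ad\, A$ preserves $\Ze$: if $[Y,T]=0$ then $[[A,Y],T] = -[[A,T],Y] = -[T,Y] = 0$. Hence $\Ze \cap {\mathcal S}$ decomposes as the sum over $\la \in F$ of the eigenspaces $\Ze_\la := \Ze \cap \ker(\ad\, A - \la)$. The relation $[A,Y] = \la Y$ is equivalent to $Y$ being an $F[X]$-module homomorphism $U^{\si(\la)} \to U$; decomposing $Y$ along the summands $U_1,\dots,U_m$, a nonzero component $U_i^{\si(\la)} \to U_j$ forces the irreducibles $p_1(X + i - 1 - \la)$ and $p_1(X + j - 1)$ to coincide. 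Under the stabilizer hypothesis (together with $p \geq m$ in prime characteristic, which ensures that $0, 1, \dots, m-1$ are distinct in $F$), this forces $\la = i - j$. Consequently $\la$ is an integer with $|\la| < m$ and $Y u_i \in U_{i-\la}$ for every $i$.

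The condition $[T, Y] = 0$ then translates, via (\ref{calct}) in the form $T h(X) = h(X-1) T$ together with (\ref{defTT}), into a recursion on the components $g_i \in F[X]/(p_{i-\la}^{s_{i-\la}})$ defined by $Y u_i = g_i(X) u_{i-\la}$ (with $g_i = 0$ outside the allowed range). When $\la \geq 0$, well-definedness of $Y$ on $U_i$ forces $p_{i-\la}^{s_{i-\la} - s_i} \mid g_i$; after factoring this out and applying the shift $X \mapsto X - (i - \la - 1)$ to land in $F[X]/(p_1^{s_i})$, the recursion should collapse to $r_i \equiv r_{i-1} \pmod{p_1^{s_i}}$ for a sequence $r_i \in F[X]/(p_1^{s_i})$. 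This leaves a single free parameter $r_{j+1} \in F[X]/(p_1^{s_{j+1}})$ when $\la = j \in \{0,\dots,m-1\}$, yielding $\dim \Ze_\la \leq d s_{j+1} = \dim T^j\E$ and hence $\Ze_\la = T^j\E$. For $\la < 0$, evaluating the recursion on $u_1$ immediately gives $g_1 = 0$, and propagation in $i$ then forces every $g_i = 0$, so $\Ze_\la = 0$.

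Summing over $\la$ yields the claimed equality $\Ze \cap {\mathcal S} = \bigoplus_{j=0}^{m-1} T^j\E$. The ``in particular'' statement is then immediate: the centralizer of $\s = \langle A\rangle \ltimes \langle T\rangle$ in $\gl(U)$ equals $\Ze \cap \ker(\ad\, A)$, which sits inside $\Ze \cap {\mathcal S}$ and, within the direct sum, coincides with the $0$-eigenspace summand $\E$. The main obstacle I expect is the bookkeeping needed to collapse the recursion for the $g_i$ into the clean form $r_i \equiv r_{i-1} \pmod{p_1^{s_i}}$---in particular, coordinating the well-definedness factor $p_{i-\la}^{s_{i-\la}-s_i}$ with the index shift so that the apparent dependence on the varying $p_i$'s and $s_i$'s disappears. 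Once the stabilizer hypothesis has pinned $\la$ down to an integer in $\{-(m-1),\dots,m-1\}$, the remainder is linear algebra in finite quotients of $F[X]$.
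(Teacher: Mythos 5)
Your overall strategy is essentially the paper's: decompose $\Ze\cap{\mathcal S}$ into $\ad\,A$-eigenspaces inside $\Ze$, use the trivial-stabilizer hypothesis to identify each eigenvalue with an integer block shift, and then use $[T,Y]=0$ to reduce an eigenvector to a single polynomial parameter, matching $\dim_F T^j\E=ds_{j+1}$ from Lemma \ref{31} (the paper phrases this last step by showing directly $H=T^jg(E)$, with the congruence $g_k(X)\equiv g_{k-1}(X+1)\bmod p_k(X)^{s_{j+k}}$ — note the modulus is $s_{j+k}$, which is the bookkeeping you anticipated). The one genuine divergence is your treatment of ``negative'' shifts: the paper first observes that every element of $\Ze$ is block upper triangular (it preserves $\ker T^j=U_1\oplus\cdots\oplus U_j$), so only the superdiagonals $0,1,\dots,m-1$ can occur and negative eigenvalues never arise; you instead allow $\la<0$ and kill that eigenspace by the recursion started at $u_1$. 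That alternative is correct and buys you a proof that does not need the upper-triangularity of $\Ze$, at the cost of the extra vanishing computation.

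There is, however, a gap in the prime characteristic case admitted by the hypotheses. If $\chr(F)=p$ with $m\le p\le 2m-2$, the integers $i-j$, $1\le i,j\le m$, are not pairwise distinct modulo $p$: the eigenvalue $\la=j\in F$ is also the image of $j-p$, so an eigenvector $Y\in\Ze\cap\ker(\ad\,A-\la)$ may a priori have simultaneously components $U_i\to U_{i-j}$ (superdiagonal $j$) and components $U_i\to U_{i+(p-j)}$ (subdiagonal $p-j$). Your sentence ``consequently $\la$ is an integer with $|\la|<m$ and $Yu_i\in U_{i-\la}$ for every $i$'' silently assumes a single integer shift, and the recursion that follows is set up only for that case; the hypothesis $p\ge m$ guarantees that $0,1,\dots,m-1$ are distinct in $F$, but not that all differences $i-j$ are. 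The repair is easy: split $Y$ into its block-diagonal components — $[T,Y]=0$ splits accordingly, since $T$ shifts every block diagonal by exactly one, and your own $\la<0$ computation then annihilates the subdiagonal part — or simply invoke the block upper-triangularity of $\Ze$ as the paper does. In characteristic $0$ no repair is needed and your argument is complete.
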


\begin{proof} It is clear that $\Ze$ and ${\mathcal S}$ are invariant
under $\ad\, A$ and thus $\Ze\cap {\mathcal S}$ is the direct sum of the
intersections of $\Ze$ with each of the eigenspaces
of $\ad\,A$ in $\gl(U)$.

Moreover, it is readily seen that $\Ze$ consists of block upper
triangular matrices with diagonal blocks of sizes $ds_1,\dots,d
s_m$.

Let $\delta\in F$ be an eigenvalue of $\ad\, A$ acting on $\gl(U)$
and let $H\in \Ze$ be a corresponding eigenvector represented by
$M\in\gl(t)$ relative to $\B$. Now $M$ must have a non-zero block,
say $D$, in some position $(k,\ell)$, where $k\leq \ell$. From
$$
D(C_\ell+\delta I)=C_kD
$$
we obtain a non-zero $F[X]$-homomorphism
$$
F[X]/p_\ell(X-\delta)^{s_\ell}\to F[X]/p_k(X)^{s_k},
$$
which implies
$$
p_\ell(X)=p_k(X+\delta).
$$
On the other hand,
$$
p_\ell(X)=p_{k}(X+\ell-k).
$$
Since $p_1,\dots,p_m$ are in the same $F^+$-orbit and $p_1$ has trivial stabilizer
(this is automatic if $\chr(F)=0$), so do $p_1,\dots,p_m$, which implies $\delta=\ell-k$. It follows that the
eigenvalues of $\ad\, A$ acting on $\Ze$ are $0,1,\dots,m-1$. Moreover,
since $p\geq m$ whenever $F$ has prime characteristic~$p$,
the matrix $M$ of every eigenvector $H\in \Ze$ of $\ad\, A$ of
eigenvalue $j$ has 0 blocks outside the $j$th superdiagonal. We
claim that $H=T^j g(E)$ for some $g\in F[X]$.

From $H(A+jI)=AH$ and the above description of $M$ we see that $H$
induces $m-j$ homomorphisms of $F[X]$-modules
$$
U_{j+k}^{\si(j)}\to U_k,\quad 1\leq k\leq m-j,
$$
and is completely determined by them. Since $p_k(X)^{s_{j+k}}$
annihilates $u_{j+k}\in U_{j+k}^{\si(j)}$ and the minimal
polynomial of $u_k\in U_k$ is $p_k(X)^{s_k}$, it follows that
\begin{equation}
\label{ufa} H u_{j+k}=p_k(X)^{s_k-s_{j+k}}g_k(X)u_k,\quad 1\leq
k\leq m-j,
\end{equation}
for some $g_1,\dots,g_{m-j}\in F[X]$. Thus, if $1<k\leq m-j$, we
have
$$
\begin{aligned}
THu_{j+k}&=T p_k(X)^{s_k-s_{j+k}}\,g_k(X)u_{k}\\
&=p_k(X-1)^{s_k-s_{j+k}}\,g_k(X-1)p_{k-1}(X)^{s_{k-1}-s_{k}}u_{k-1}\\
&=p_{k-1}(X)^{s_{k-1}-s_{k+j}}\,g_k(X-1)u_{k-1}.
\end{aligned}
$$
On the other hand,
$$
\begin{aligned}
HTu_{j+k}&=Hp_{j+k-1}(X)^{s_{j+k-1}-s_{j+k}}u_{j+k-1}\\
&=p_{j+k-1}(X-j)^{s_{j+k-1}-s_{j+k}}p_{k-1}(X)^{s_{k-1}-s_{j+k-1}}\,g_{k-1}(X)u_{k-1}\\
&=p_{k-1}(X)^{s_{k-1}-s_{k+j}}\,g_{k-1}(X)u_{k-1}.
\end{aligned}
$$
Since $[T,H]=0$ we deduce
$$
g_k(X-1)\equiv g_{k-1}(X)\mod p_{k-1}(X)^{s_{j+k}},
$$
which means
$$
g_k(X)\equiv g_{k-1}(X+1)\mod p_{k}(X)^{s_{j+k}}.
$$
Since $u_k$ is annihilated by $p_k(X)^{s_k}$ we may assume in
(\ref{ufa}) that
$$
g_k(X)=g_1(X+(k-1)),\quad 1\leq k\leq m-j.
$$
It follows from (\ref{ufa}) that $H=T^j g(E)$, where $g(X)=g_1(X-j)$.
\end{proof}

Consider the subspace $F[X]_n$, $n\geq 0$, of $F[X]$ defined by
$$
F[X]_n=\{a_0+a_1X+\cdots+a_{n-1}X^{n-1}\,|\, a_0,a_1,\dots,a_{n-1}\in F\}.
$$
\begin{theorem}\label{falta} Let $p_1\in F[X]$ be a monic irreducible polynomial.
Given $m>1$, define $p_2,\dots,p_m$ by means of (\ref{jio}), and let $s_1\geq\cdots\geq s_m$
be natural numbers. Let $U,A,T$ and $E$ be defined by (\ref{ww1})-(\ref{defTT}) and (\ref{ww4}).

Let $\h=\langle y\rangle \ltimes
\a$, where $\ad\, y$ acts diagonalizably on the abelian Lie algebra $\a$. For
$\ga\in F$ set
$$
\a(\ga)=\{v\in\a\,|\, [y,v]=\ga v\}.
$$
Suppose there exists a non-zero $v\in\a(1)$. Let
$$
g^j:\a(j)\to F[X]_{ds_{j+1}},\quad 0\leq j<m,
$$
be arbitrary linear maps satisfying
$g^1_v=1$. Then the map $R:\h\to \gl(U)$ given by
$$
\begin{array}{ll}
R(y)=A, & \\[1mm]
R(u)=g^j_u(E)T^j,  & u\in\a(j),\;0\leq j<m,\\[1mm]
R(u)=0,            & u\in\a(\ga),\; \ga\notin\{0,1,\dots,m-1\}
\end{array}
$$
defines a representation of $\h$ that
makes $U$ into a linked $\h$-module
of nilpotency length $m$. Moreover, $U$ is uniserial if and only
if $s_1=\cdots=s_m=1$.

Furthermore, suppose that, if $F$ has prime characteristic $p$, then $p\geq m$ and
the stabilizer of $p_1$ under the action (\ref{actf}) of $F^+$ is trivial.
Then any modification whatsoever to $m,p_1,s_1,\dots,s_m,g^0,\dots,g^{m-1}$ that keeps
$g^1_v=1$ produces an inequivalent representation.
\end{theorem}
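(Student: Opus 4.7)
\emph{Proof plan.} The theorem splits into three parts: (i) verifying that $R$ is a Lie homomorphism, (ii) recognizing the linked structure and the criterion for uniseriality, and (iii) the uniqueness statement, which is the substantive point.

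For (i), the defining relations of $\h$ are $[y,u]=ju$ for $u\in\a(j)$ and $[\a,\a]=0$. The first is preserved by $R$ because (\ref{relat}) gives $[A,g^j_u(E)T^j]=jg^j_u(E)T^j$, and the second because $E$ and $T$ commute, so any two operators of the form $g(E)T^j$ do too. For (ii), the image $R(\h)$ is a subalgebra of $\g$ containing $\s$, since $R(y)=A$ and $R(v)=T$ (the latter because $g^1_v=1$); applying Proposition~\ref{38} to $R(\h)$ shows that $U$ is linked of nilpotency length $m$ with factors isomorphic to $U_1,\dots,U_m$. The $\h$-submodules of $U$ coincide with the $R(\h)$-submodules, and $R([\h,\h])=[R(\h),R(\h)]$ by solvability, so the nilpotency series is the same whether computed as $\h$- or $R(\h)$-module; this transfers the linked structure back to $\h$. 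The uniseriality criterion $s_1=\cdots=s_m=1$ then follows from Proposition~\ref{39}.

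For (iii), suppose representations $R$ and $R'$ built from $(m,p_1,s_i,g^j)$ and $(m',p'_1,s'_i,g'^j)$ are equivalent via $\phi\colon U\to U'$. I would first extract $m$ and the tuple $(p_1,s_1,\dots,s_m)$. An $\h$-isomorphism preserves the nilpotency series, so $m=m'$ and $\phi$ induces $F[X]$-isomorphisms between corresponding nilpotency factors (with $F[X]$ acting via $R(y)=A$). The $i$-th factor is a cyclic $F[X]$-module with annihilator $(p_1(X+i-1)^{s_i})$; comparing the case $i=1$ yields $p_1=p'_1$ and $s_1=s'_1$, and the remaining $s_i$ follow from $i\ge 2$.

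With these parameters identified, the two constructions take place on the same $U$ with identical $A,T,E$, and $\phi\in\End_F(U)$. From $\phi A=A\phi$ together with $\phi T=T\phi$ (which crucially uses the normalization $R(v)=R'(v)=T$), $\phi$ centralizes $\s$. Here Proposition~\ref{310} enters, via the hypothesis on $\chr(F)$ and the stabilizer of $p_1$, to give $\phi=h(E)$ with $h$ coprime to $p_1$ (so that $h(E)$ is invertible in $\E\cong F[X]/(p_1^{s_1})$). Substituting into $\phi R(u)=R'(u)\phi$ for $u\in\a(j)$ with $j\ge 1$ and invoking Lemma~\ref{31}, I would obtain $h\cdot(g^j_u-g'^j_u)\equiv 0\pmod{p_1^{s_{j+1}}}$; since $h$ is a unit modulo $p_1^{s_1}\supset p_1^{s_{j+1}}$ and both $g^j_u,g'^j_u$ lie in $F[X]_{ds_{j+1}}$ with degree less than $ds_{j+1}$, this forces $g^j_u=g'^j_u$. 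The case $j=0$ is analogous, using the isomorphism $\E\cong F[X]/(p_1^{s_1})$ directly. The main obstacle is exactly this last step: without Proposition~\ref{310} the centralizer of $\s$ could exceed $\E$, which would admit spurious equivalences and obstruct the rigidity of the parametrization.
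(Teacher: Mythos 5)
Your proposal is correct and follows essentially the same route as the paper: parts (i)--(ii) are the paper's appeal to Propositions \ref{38} and \ref{39} (applied to the image $R(\h)\supseteq\s$), and part (iii) matches the paper's argument that $m$ and the prime powers $p_i^{s_i}$ are invariants of the nilpotency factors, with the rigidity of $g^0,\dots,g^{m-1}$ coming from the last statement of Proposition \ref{310}. You merely spell out the details (the intertwiner $\phi=h(E)$ with $h$ coprime to $p_1$, and Lemma \ref{31} to compare the $g^j$ modulo $p_1^{s_{j+1}}$) that the paper leaves implicit.
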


\begin{proof} The first two statements follow from Propositions
\ref{38} and \ref{39} respectively. As for the third, $m$ is the nilpotency length of $U$, while
$p_1^{s_1},\dots,p_m^{s_m}\in F[X]$ are the prime powers associated to the nilpotency factors of $U$
when viewed as $F[X]$-modules via the action of $y$, Thus, any modification to $m,p_1,s_1,\dots,s_m$
yields an inequivalent representation. The assertion regarding $g^0,\dots,g^{m-1}$ follows from the last
statement of Proposition \ref{310}.
\end{proof}

\section{Classification of linked modules}

\begin{theorem}\label{primi} Suppose that $F$ is an infinite perfect field.
Consider the polynomial algebra $R=F[X_1,\dots,X_n]$
and assume that $U_1,\dots,U_m$ are uniserial $R$-modules. Then
there exists a normal linear combination $Y=a_1X_1+\cdots+a_n
X_n$, i.e., one where every $a_i\in F$ is non-zero, such that
$U_1,\dots,U_m$ are uniserial $F[Y]$-modules.
\end{theorem}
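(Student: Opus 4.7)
The plan is to reduce the statement to a question about single generators of finite-dimensional commutative $F$-algebras, and then to invoke Zariski density over the infinite field $F$. The condition that every $a_j$ be non-zero contributes one further non-empty Zariski open constraint on $a=(a_1,\ldots,a_n)\in F^n$.

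First, each uniserial $R$-module $U_i$ is cyclic (generated by any element outside its unique maximal submodule), hence isomorphic to $A_i:=R/I_i$ where $I_i=\mathrm{Ann}_R(U_i)$. The submodules of $U_i$ are the ideals of $A_i$, which form a chain; so $A_i$ is a commutative Artinian local $F$-algebra whose maximal ideal $\mathfrak{m}_i$ is principal, and every ideal is a power of $\mathfrak{m}_i$. Let $K_i=A_i/\mathfrak{m}_i$ and let $\ell_i$ be the nilpotency index of $\mathfrak{m}_i$. Since $F$ is perfect, the extension $K_i/F$ is separable; a Newton iteration inside the Artinian ring $A_i$ lifts a primitive element of $K_i$ to a root of its minimal polynomial in $A_i$, producing an $F$-algebra section $\sigma\colon K_i\hookrightarrow A_i$. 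Picking a generator $\pi$ of $\mathfrak{m}_i$, this yields $A_i\cong K_i[\pi]/(\pi^{\ell_i})$ as $F$-algebras.

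The key reduction is this: the $R$-action on $U_i$ factors as a surjection $R\twoheadrightarrow A_i\hookrightarrow \End_F(U_i)$, so if the image $\overline Y\in A_i$ of $Y:=\sum_j a_jX_j$ generates $A_i$ as an $F$-algebra, then $F[Y]$ and $R$ have the same image in $\End_F(U_i)$; in particular the $F[Y]$-submodules of $U_i$ coincide with its $R$-submodules and therefore form a chain. Hence it is enough to find $a\in(F^\times)^n$ such that, for every $i=1,\dots,m$, the image $\overline Y\in A_i$ generates $A_i$ as an $F$-algebra.

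For each $i$, set $S_i:=\{a\in F^n:F[\overline Y]=A_i\}$. I claim $S_i$ is a non-empty Zariski open subset of $F^n$. Writing $\overline Y=\sigma(\theta_a)+z_a$ with $\theta_a\in K_i$ and $z_a\in\mathfrak{m}_i$, a Taylor expansion of $q(\overline Y)$, where $q$ is the minimal polynomial of $\overline Y$ (using Hasse derivatives in positive characteristic, valid thanks to the separability of $K_i/F$), shows that $\overline Y$ generates $A_i$ if and only if (i) $\theta_a$ generates $K_i$ as an $F$-algebra, and (ii) when $\ell_i\geq 2$, the image of $z_a$ in $\mathfrak{m}_i/\mathfrak{m}_i^2$ is non-zero. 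Both conditions define Zariski open subsets of $F^n$. Condition (i) is non-empty by the primitive element theorem, since the images of $\overline X_1,\ldots,\overline X_n$ in $K_i$ generate this finite separable extension of $F$. For (ii), expressing $\overline X_j=\sigma(\theta_j)+z_j$ with $z_j\equiv\lambda_j\pi\pmod{\mathfrak{m}_i^2}$, the scalars $\lambda_j\in K_i$ cannot all vanish, for otherwise $F[\overline X_1,\ldots,\overline X_n]$ would lie in the proper $F$-subalgebra $\sigma(K_i)+\mathfrak{m}_i^2$, contradicting that the $\overline X_j$ generate $A_i$. Thus $\{a:\sum_j a_j\lambda_j\neq 0\}$ is a non-empty Zariski open set, and $S_i$ contains its intersection with the open set defined by (i).

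Finally, since $F$ is infinite, the intersection $(F^\times)^n\cap\bigcap_{i=1}^m S_i$ of finitely many non-empty Zariski open subsets of $F^n$ is non-empty, and any point in it supplies the desired normal linear combination $Y$. The main obstacle is the non-emptiness claim for $S_i$: it rests on the structural identification $A_i\cong K_i[\pi]/(\pi^{\ell_i})$ (which uses $F$ perfect) together with the derivative-based criterion for single generators of this truncated polynomial algebra, and requires Hasse derivatives rather than ordinary ones once $\chr(F)>0$.
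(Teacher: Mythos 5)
Your argument is correct, but it follows a genuinely different route from the paper. The paper proves Theorem \ref{primi} by induction on $m$: the case $m=1$ is imported from \cite{CS2}, Theorem 3.1 (with Note \ref{exno} explaining how to secure normality), and the inductive step takes a normal $Z$ good for $U_1,\dots,U_{m-1}$ and a normal $W$ good for $U_m$, passes to an algebraic closure, and uses the fact that $Z$ commutes with $W$ (hence is a polynomial in each Jordan block of $W$ on $U_m$) together with the prime-power minimal polynomial property of uniserials to show that $Z+\beta W$ works for all but finitely many $\beta\in F$. You instead argue intrinsically: each $U_i$ is cyclic over the image algebra $A_i=R/\mathrm{Ann}_R(U_i)$, which is local Artinian with principal maximal ideal; perfectness of $F$ gives a separable residue field $K_i$ and, via Hensel/Newton lifting, a coefficient field, so $A_i\cong K_i[\pi]/(\pi^{\ell_i})$; you then characterize when a linear combination generates $A_i$ as an $F$-algebra (primitive residue plus nonvanishing in $\mathfrak{m}_i/\mathfrak{m}_i^2$) and conclude by intersecting finitely many non-empty Zariski open subsets of $F^n$ (including the normality condition $\prod_j a_j\neq 0$), which is legitimate since $F$ is infinite. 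What your approach buys is a self-contained and uniform treatment: it in effect reproves the $m=1$ case that the paper only cites, treats all $m$ modules and the normality constraint in one stroke, and avoids Jordan forms over the algebraic closure; the paper's proof is shorter given \cite{CS2} and only needs pairwise generic perturbations. Two small points you should fix in the write-up: the polynomial $q$ in your Taylor expansion must be the minimal polynomial over $F$ of the residue $\theta_a$ (equivalently of $\sigma(\theta_a)$), not of $\overline Y$ itself, whose minimal polynomial kills it; and Hasse derivatives exist in any characteristic --- separability is used only to guarantee that $q'(\sigma(\theta_a))$ is a unit, after which $q(\overline Y)$ generates $\mathfrak{m}_i$ and the equality $F[\overline Y]+\mathfrak{m}_i=A_i$ forces $F[\overline Y]=A_i$ by Nakayama (or by iterating powers of $q(\overline Y)$).
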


\begin{note}\label{exno}{\rm The case $m=1$ follows from \cite{CS2}, Theorem
3.1. Since the normality condition was not explicitly stated
there, we indicate the exact modifications required in the second
half of the proof of \cite{CS2}, Theorem 3.1, that ensure
normality. In the case $\ell=1$ the standard proof of the theorem
of the primitive element yields normality. In the inductive step,
by the same reason, for all but finitely many elements $\alpha\neq
0$ of $F$, the element $z=v+\al y$ is normal and satisfies
$B=F[z\vert_W]$.}
\end{note}

\begin{proof} By induction on $m$. The case $m=1$ follows from
\cite{CS2}, Theorem 3.1, as explained in Note \ref{exno}. Suppose
that $m>1$ and the result is true for less than $m$ uniserial
$R$-modules. Let $U_1,\dots,U_m$ be uniserial $R$-modules. By
the inductive hypothesis and the case $m=1$ there are normal
$Z,W\in\langle X_1,\dots,X_n\rangle$ such that $U_1,\dots,U_{m-1}$
are uniserial $F[Z]$-modules and $U_m$ is a uniserial
$F[W]$-module.

As explained in the proof of \cite{CS2}, Theorem 3.1, every
element of $R$ acts on a uniserial $R$-module with minimal
polynomial equal to a prime power. Let $p^k$ be the minimal
polynomial of $W$ acting on $U_m$, where $p\in F[X]$ is monic and
irreducible of degree $d$. Let $K$ be an algebraic closure of $F$.
We then have $p=(X-\al_1)\cdots (X-\al_d)$, where $\al_i\in K$ are
distinct. Let $E(\al_1),\dots,E(\al_d)$ be the generalized
eigenspaces of $W$ acting on $U_m$. For each $1\leq i\leq d$ there
is a basis of $E(\al_i)$ relative to which $W$ is represented by a
Jordan block $J_k(\al_i)$. Since $Z$ and $W$ commute, $Z$ must be
represented by a polynomial in $J_k(\al_i)$. It follows that there
is at most one $\be_i\in F$ such that the action of $Z+\be_i W$ on
$E(\al_i)$ is not represented, relative to some basis, by a Jordan
block $J_k(\ga_i)$. We deduce that the action of every $Z+\be W$,
$\be\in F$, on $U_m$ is represented by the direct sum of Jordan
blocks $J_k(\ga_1),\dots,J_k(\ga_n)$, with all $\ga_1,\dots,\ga_n$
distinct, except for finitely $\be\in F$. Thus, but for these
scalars, the minimal polynomial of the action of  $Z+\be W$ on
$U_m$ has degree $dk$. Since this minimal polynomial must be a
prime power, we see that $U_m$ is a uniserial $F[Z+\be W]$-module
for all but finitely many $\be\in F$. In spite of the apparent
asymmetry between $Z$ and $W$ given by location of $\be$, the same
argument shows that $U_j$, $1\leq j\leq m-1$, is a uniserial
$F[Z+\be W]$-module for all but finitely many $\be\in F$. Since
$Z+\be W$ is normal for all but finitely many $\be\in F$, the
result follows.
\end{proof}

Given $A\in M_m(F)$ and $B\in M_n(F)$, consider the endomorphisms
$\ell_A$ and $r_B$ of $M_{m,n}(F)$ given by
$$
\ell_A(Y)=AY,\quad r_B(Y)=YB.
$$
The minimal polynomial of $A$ will be denoted by $\mu_A$.

\begin{lemma}
\label{eigen} Let $A\in M_m(F)$ and $B\in M_n(F)$ and let $K$ be
an algebraic closure of $F$.  Then every eigenvalue of
$\ell_A-r_B$ in $K$ is of the form $\al-\be$, where $\al$ is a
root of $\mu_A$ and $\be$ is a root of $\mu_B$.
\end{lemma}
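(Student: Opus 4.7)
The plan is to pass to the algebraic closure $K$ and exploit the fact that $\ell_A$ and $r_B$ commute. Since $\ell_A r_B(Y)=AYB=r_B\ell_A(Y)$, we have $[\ell_A,r_B]=0$, and all three operators extend naturally to $M_{m,n}(K)$. Because commuting $K$-linear operators on a finite dimensional $K$-space can be simultaneously put into upper triangular form (pick a common eigenvector, which exists as $K$ is algebraically closed and $r_B$ preserves the nonzero eigenspaces of $\ell_A$, then induct on dimension), we may fix a basis of $M_{m,n}(K)$ in which both $\ell_A$ and $r_B$ are upper triangular.

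The next step is to identify the eigenvalues of each of these two operators. The column decomposition $M_{m,n}(K)=K^m\oplus\cdots\oplus K^m$ is $\ell_A$-stable and $\ell_A$ acts as $A$ on every summand, so the eigenvalues of $\ell_A$ coincide with those of $A$, namely the roots of $\mu_A$ in $K$. The analogous row decomposition shows that the eigenvalues of $r_B$ are the roots of $\mu_B$.

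Combining these facts, in the simultaneously triangularizing basis the diagonal of $\ell_A$ lists roots of $\mu_A$, the diagonal of $r_B$ lists roots of $\mu_B$, and hence the diagonal of $\ell_A-r_B$ lists differences $\al-\be$ with $\al$ a root of $\mu_A$ and $\be$ a root of $\mu_B$. Since the eigenvalues of an upper triangular matrix are precisely the entries on its diagonal, every eigenvalue of $\ell_A-r_B$ in $K$ has the required form.

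The only nontrivial ingredient is the simultaneous triangularization of the commuting pair $\{\ell_A,r_B\}$ over $K$, but this is entirely standard and I do not anticipate any genuine obstacle; the rest of the argument is just bookkeeping about eigenvalues of left and right multiplication on matrix spaces.
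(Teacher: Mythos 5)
Your proof is correct and follows essentially the same route as the paper: extend scalars to $K$, use that $\ell_A$ and $r_B$ commute to triangularize them simultaneously, and identify their eigenvalues with the roots of $\mu_A$ and $\mu_B$ respectively (the paper phrases this as $\mu_{\ell_A}=\mu_A$, $\mu_{r_B}=\mu_B$). Your added details on the column/row decompositions and the common-eigenvector induction merely spell out what the paper leaves as standard.
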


\begin{proof} We have $\mu_{\ell_A}=\mu_A$ and $\mu_{r_B}=\mu_B$. Since $\ell_A$ and $r_B$ commute,
they can be simultaneously triangularized over $K$, whence the
result follows.
\end{proof}

\begin{cor}
\label{eigen2} Let $A\in M_m(F)$ and $B\in M_n(F)$. Suppose that
$\gcd(\mu_A,\mu_B)=1$. Then $\ell_A-r_B$ is an isomorphism.
\end{cor}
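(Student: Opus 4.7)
The plan is to reduce the claim to Lemma~\ref{eigen} via the observation that, since $\ell_A-r_B$ is an endomorphism of the finite dimensional space $M_{m,n}(F)$, it is an isomorphism if and only if it is injective, which in turn is equivalent to $0$ not being an eigenvalue of $\ell_A-r_B$. Passing to the algebraic closure $K$ does not affect injectivity: a square matrix is invertible over $F$ iff it is invertible over $K$, so it suffices to show $0$ is not an eigenvalue of $\ell_A-r_B$ regarded as an endomorphism of $M_{m,n}(K)$.

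Now I apply Lemma~\ref{eigen}. Any eigenvalue of $\ell_A-r_B$ in $K$ is of the form $\alpha-\beta$ with $\alpha$ a root of $\mu_A$ and $\beta$ a root of $\mu_B$. If $0$ were such an eigenvalue, one would have $\alpha=\beta\in K$, giving a common root of $\mu_A$ and $\mu_B$, so that $X-\alpha$ would divide $\gcd(\mu_A,\mu_B)$ in $K[X]$. This contradicts the hypothesis $\gcd(\mu_A,\mu_B)=1$, which remains valid over $K$ since the gcd is preserved under field extension. Hence $0$ is not an eigenvalue of $\ell_A-r_B$, and the corollary follows.

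There is essentially no obstacle here beyond correctly invoking that coprimality of the minimal polynomials is stable under passage to $K$ and that invertibility of a linear map on a finite dimensional space can be tested after extending scalars. All of the substantive content is packaged in Lemma~\ref{eigen}, which the corollary is designed to exploit.
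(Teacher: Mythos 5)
Your proof is correct and follows exactly the route the paper intends: the corollary is stated as an immediate consequence of Lemma~\ref{eigen}, since $\gcd(\mu_A,\mu_B)=1$ (which persists over $K$) rules out $0$ as an eigenvalue of $\ell_A-r_B$, and injectivity on the finite dimensional space $M_{m,n}(F)$ gives bijectivity. Your extra remarks on scalar extension are harmless but not even strictly needed, as $0\in F\subset K$ already lets you apply the lemma directly.
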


\begin{lemma}\label{up} Let $A$ be a block upper triangular matrix over
an arbitrary field $F$, with diagonal blocks $A_1,\dots,A_m$, not
necessarily of the same size. Then there is a block upper
triangular matrix $P$ over $F$ with identity diagonal blocks such
that $B=P^{-1}AP$ satisfies: if $\gcd(\mu_{A_i},\mu_{A_j})=1$ then
block $(i,j)$ of $B$ is 0.
\end{lemma}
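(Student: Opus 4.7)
The plan is to build $P$ as a product $P_1 P_2 \cdots P_{m-1}$, where each $P_k = I + X_k$ is a block unipotent matrix with $X_k$ supported on the $k$-th block superdiagonal. I will induct on $k$, using $P_k$ to kill the undesired blocks on superdiagonal $k$ without disturbing anything closer to the diagonal.

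First I would carry out the core computation: suppose $X$ is a matrix whose nonzero blocks all lie on block superdiagonal $k$. Then $X^r$ is supported on block superdiagonal $rk$, so $(I+X)^{-1} = I - X + X^2 - \cdots$ and
\[
(I+X)^{-1} A (I+X) = A + (AX - XA) + (\text{terms on block superdiagonals } \geq 2k).
\]
A direct check using the block upper triangularity of $A$ shows that $AX$ and $XA$ both vanish on block superdiagonals strictly less than $k$, while on the $k$-th superdiagonal itself the block in position $(i, i+k)$ of $AX - XA$ is exactly $A_i X_{i,i+k} - X_{i,i+k} A_{i+k}$. Consequently any such conjugation fixes the diagonal blocks $A_1,\ldots,A_m$ and every block strictly below superdiagonal $k$.

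For the inductive step, suppose that after conjugating by $P_1 \cdots P_{k-1}$ the current matrix $A'$ has vanishing block at every position $(i,j)$ with $j - i < k$ and $\gcd(\mu_{A_i}, \mu_{A_j}) = 1$. For each pair $(i, i+k)$ with $\gcd(\mu_{A_i}, \mu_{A_{i+k}}) = 1$, Corollary \ref{eigen2} says that $\ell_{A_i} - r_{A_{i+k}}$ is invertible, so the Sylvester equation
\[
A_i Y_{i,i+k} - Y_{i,i+k} A_{i+k} = -A'_{i,i+k}
\]
admits a solution $Y_{i,i+k}$. Collecting these $Y_{i,i+k}$ (together with zero blocks at the other $(i,i+k)$ positions) into a matrix $X_k$ supported on superdiagonal $k$, the conjugation by $P_k = I + X_k$ simultaneously zeroes out all of the targeted blocks on superdiagonal $k$, while the preliminary computation guarantees that the diagonal blocks and superdiagonals $1,\ldots,k-1$ are untouched. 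After $k = m-1$ the process terminates, and the product $P = P_1 \cdots P_{m-1}$ is again block upper triangular with identity diagonal blocks, so $B = P^{-1} A P$ has the required vanishing pattern.

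The only nontrivial input is the solvability of the Sylvester equation, which is exactly Corollary \ref{eigen2}; the remaining work is the bookkeeping that identifies which block superdiagonals are modified by conjugation with $I + X_k$. The potential pitfall — that higher-order terms $X_k^r$ might disturb an already cleaned lower superdiagonal — does not materialize because $X_k^r$ is supported on superdiagonal $rk \geq k$, so previously produced zeros at smaller distances from the diagonal are safe.
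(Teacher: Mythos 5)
Your proof is correct and follows essentially the same route as the paper: both clear the offending blocks by conjugating with block upper triangular unipotent matrices whose off-diagonal blocks are obtained by solving the Sylvester equations $A_iZ-ZA_j=-U$ furnished by Corollary \ref{eigen2}. The only difference is organizational: you clear an entire block superdiagonal at once, inducting on the distance from the diagonal, while the paper clears one block at a time (working up the block rows and left to right within a row); both bookkeeping schemes ensure that previously cleared blocks are not disturbed.
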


\begin{proof} We will clear all required blocks of $A$ by means of transformations $$T\mapsto
Q^{-1}TQ,$$ where $Q=Q(i,j,Z)$ has identity diagonal blocks, block
$(i,j)$ with $i<j$ is equal to $Z$, and all other blocks are equal
to 0.

Suppose that $C$ was obtained from $A$ by means of a sequence of these
transformations and satisfies the following: for some $1\leq i<n$,
all required blocks of $C$ are 0 below row block $i$, and there is
$i<j\leq n$ such that if $i<k<j$ and $\gcd(\mu_{A_i},\mu_{A_k})=1$
then block $(i,k)$ of $C$ is 0. Let $U$ be the $(i,j)$ block of
$C$. By Lemma \ref{eigen2}, if $\gcd(\mu_{A_i},\mu_{A_j})=1$,
there is a unique $Z$ such that
\begin{equation}
\label{zet} A_iZ-ZA_j=-U.
\end{equation}
Let $Q$ be the identity matrix if $\gcd(\mu_{A_i},\mu_{A_j})\neq
1$, and $Q=Q(i,j,Z)$, with $Z$ satisfying (\ref{zet}), otherwise.
Then the blocks of $D=Q^{-1}CQ$ and $C$ coincide below row block
$i$ as well as within row block $i$ but to the left of block
$(i,j)$. Moreover, block $(i,j)$ of $D$ is 0 if
$\gcd(\mu_{A_i},\mu_{A_j})=1$. We may thus continue this process
and find $B$ as required.
\end{proof}

For the remainder of this section let $\g$ be a Lie algebra with solvable radical $\r$ and set
$\n=[\g,\r]$.

\begin{lemma}
\label{nilradical} Suppose that $\chr(F)=0$. Then $\n$ acts
trivially on every irreducible $\g$-module~$V$.
\end{lemma}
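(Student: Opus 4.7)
The plan is to follow the standard Bourbaki-style argument (I.5.3). First, reduce to the case where $F$ is algebraically closed: base-changing to an algebraic closure $\overline F$ sends $\r$ to the radical of $\g \otimes_F \overline F$ and sends $[\g, \r]$ to $[\g \otimes \overline F, \r \otimes \overline F]$; since $V \otimes_F \overline F$ decomposes as a sum of irreducible $\g \otimes \overline F$-modules and $V$ embeds in it, it suffices to prove the result over $\overline F$. Next, by Lie's theorem applied to the solvable algebra $\r$ acting on $V$, there exists a linear form $\la : \r \to F$ such that the common weight space
\[
V_\la = \{v \in V \mid rv = \la(r) v \text{ for all } r \in \r\}
\]
is nonzero.

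The key step is to show $\la$ vanishes on $[\g, \r]$. Fix $x \in \g$ and $v \in V_\la \setminus \{0\}$, and set $W_k = \langle v, xv, \dots, x^k v\rangle$; let $n$ be least with $W_n = W_{n-1}$, and put $W = W_{n-1}$, so $\{v, xv, \dots, x^{n-1} v\}$ is a basis of $W$ and $W$ is $x$-stable. By induction on $k$, using $r(xu) = x(ru) + [r, x] u$ and the fact that $[r, x] \in \r$, one shows that for every $r \in \r$,
\[
r(x^k v) \equiv \la(r)\, x^k v \pmod{W_{k-1}}.
\]
Hence $W$ is $\r$-stable, the matrix of $r|_W$ in the chosen basis is upper triangular with diagonal entries $\la(r)$, and $\mathrm{tr}(r|_W) = n \la(r)$. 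Applying this to $[r, x] \in \r$ yields $\mathrm{tr}([r, x]|_W) = n \la([r, x])$; but since $W$ is both $r$- and $x$-stable, $[r, x]|_W$ is a commutator of operators on the finite-dimensional space $W$ and so has trace $0$. In characteristic $0$ this forces $\la([r, x]) = 0$.

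Once $\la$ vanishes on $[\g, \r]$, the subspace $V_\la$ becomes $\g$-stable: for $v \in V_\la$, $x \in \g$, $r \in \r$,
\[
r(xv) = x(rv) + [r, x] v = \la(r) xv + \la([r, x]) v = \la(r) xv,
\]
so $xv \in V_\la$. Irreducibility of $V$ gives $V_\la = V$, hence every $r \in \r$ acts as the scalar $\la(r)$, and in particular $[\g, \r]$ acts as $\la([\g, \r]) = 0$. The main obstacle is the trace calculation in the middle step: it is the only place where $\chr(F) = 0$ enters the argument, and it is precisely what is lost in prime characteristic, where one can only conclude that $[\g, \r]$ acts by nilpotent operators rather than by zero.
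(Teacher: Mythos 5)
Your proof is correct, but it takes a somewhat different route from the paper's. Where the paper applies Lie's theorem directly to the ideal $\n=[\g,\r]$, cites the Invariance Lemma (Fulton--Harris, Lemma 9.13) as a black box to conclude that the weight space is a $\g$-submodule, and then gets the vanishing from the observation that elements of $\n\subseteq[\g,\g]$ act by traceless operators (so a scalar action must be zero in characteristic $0$), you apply Lie's theorem to the full radical $\r$ and reprove the strong form of the Invariance Lemma from scratch: the flag $W_k=\langle v,xv,\dots,x^kv\rangle$ and the trace computation $\mathrm{tr}([r,x]|_W)=n\la([r,x])=0$ is exactly where characteristic $0$ enters for you, rather than via tracelessness of the $\n$-action. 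The two proofs also diverge in the descent from $\overline F$ to $F$: the paper only extracts nilpotency of the $\n$-action from the algebraically closed case and then finishes over $F$ with Engel's theorem plus the Invariance Lemma, whereas you invoke preservation of the radical and of semisimplicity of $V\otimes_F\overline F$ under base change -- both standard in characteristic $0$, but the latter is a heavier input than anything the paper uses. The net trade-off: your argument is self-contained (no appeal to the Invariance Lemma), at the cost of reproving it and of the semisimplicity-under-base-change fact; the paper's is shorter by citing the lemma and keeps the descent step elementary.
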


\begin{proof} Suppose first that $F$ is algebraically closed. By Lie's theorem
there is a linear functional $\lambda:\n\to F$ such that
$$
U=\{v\in V\,|\, xv=\lambda(x)v\text{ for all }x\in\n\}
$$
is non-zero. Since $\n$ is an ideal of $\g$, the Invariance Lemma
(\cite{FH}, Lemma 9.13) ensures that $U$ is a $\g$-submodule of
$V$, so $U=V$ by irreducibility. Thus $\n$ acts on $V$ by scalar
operators. But $\n$ acts on $V$ via traceless operators, so the
result follows.

Suppose next $F$ is arbitrary. By considering an algebraic closure
of $F$ we deduce from the above case that $\n$ acts via nilpotent
operators on $V$. By Engel's theorem,
$$
W=\{v\in V\,|\, xv=0\text{ for all }x\in\n\}
$$
is non-zero. We deduce, as above, that $\n$ acts trivially on $V$.
\end{proof}

Let $U$ be an admissible $\g$-module with nilpotency
length $m>1$, which means that $\n$ does not act trivially on $U$.
The nilpotency factors of $U$ will be denoted by
$$
U_i=U^{(i)}/U^{(i-1)},\quad 1\leq i\leq m.
$$
Given $2\leq i\leq m$, every $v\in\n$
gives rise to an element $f^i_v\in \Hom_F(U_i,U_{i-1})$ defined by
$$
f^i_v(u+U^{(i-1)})=vu+U^{(i-2)},\quad u\in U^{(i)}.
$$
Both $\n$ and $\Hom_F(U_i,U_{i-1})$ are $\g$-modules, and we see
that the map $$f^i:\n\to \Hom_F(U_i,U_{i-1})$$ defined by
$$v\mapsto f^i_v$$ is a homomorphism of $\g$-modules. The very definition of $U^{(0)},\dots,U^{(m)}$ implies
the following:
\begin{equation}
\label{nocero}\text{Given any }u\in U^{(i)}\setminus
U^{(i-1)}\text{ there exists }v\in\n\text{ such that
}f_v^i(u+U^{(i-1)})\neq 0.
\end{equation}

We shall make the following assumptions for the remainder of this section.

\medskip

{\sc (A1)} $U$ is linked.

\medskip

{\sc (A2)} $\g=\langle x\rangle\ltimes \a$,
where $\ad\, x$ acts diagonalizably on the abelian Lie algebra $\a$.

\smallskip

\noindent Thus $\n=[\g,\g]$ is the sum
of all eigenspaces of $\ad\, x$ in $\a$ with non-zero eigenvalue.
Let $\a_0$ the the 0-eigenspace of $\ad\, x$ in $\a$, so that
$\g=(\langle x\rangle\oplus \a_0)\ltimes\n$.

\medskip

{\sc (A3)} $F$ is an infinite perfect field or $\a_0=0$.

\begin{lemma}\label{r1} There is $y\in (\langle x\rangle\oplus \a_0)\setminus\a_0$
such that $U_1,\dots,U_m$ are
uniserial modules $F[X]$-modules via $y$.
\end{lemma}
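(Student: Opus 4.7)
The plan is to apply Theorem \ref{primi} to the abelian Lie algebra $\g/\n\cong \langle x\rangle\oplus \a_0$ acting on the nilpotency factors $U_1,\dots,U_m$, handling separately the trivial case $\a_0=0$.

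Since $U$ is linked (assumption A1), each $U_i$ is by definition a uniserial module over $\g/\n$. Because $\ad\, x$ is diagonalizable and $\a$ is abelian, $\g/\n\cong \langle x\rangle \oplus \a_0$ is an abelian Lie algebra, so its universal enveloping algebra is the polynomial algebra $R=F[x,v_1,\dots,v_k]$ on any basis $\{x,v_1,\dots,v_k\}$ of $\langle x\rangle\oplus \a_0$ with $\{v_1,\dots,v_k\}$ a basis of $\a_0$. Under this identification each $U_i$ becomes a uniserial $R$-module.

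If $\a_0=0$ then $R=F[x]$ itself, so simply choose $y=x\in \langle x\rangle\setminus\{0\}=\langle x\rangle\oplus\a_0\setminus\a_0$; each $U_i$ is then uniserial as an $F[X]$-module via $y$ by hypothesis. Otherwise, by assumption A3, $F$ is infinite and perfect, so Theorem \ref{primi} applies to the polynomial algebra $R=F[x,v_1,\dots,v_k]$ and the uniserial $R$-modules $U_1,\dots,U_m$. It yields a normal linear combination
\[
y = a_0 x + a_1 v_1 + \cdots + a_k v_k,\qquad a_0,a_1,\dots,a_k\in F^\times,
\]
such that each $U_i$ is uniserial as an $F[y]$-module, i.e., as an $F[X]$-module via $y$. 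The normality condition $a_0\neq 0$ forces $y\notin \a_0$, while $y\in \langle x\rangle\oplus\a_0$ by construction, giving $y\in(\langle x\rangle\oplus\a_0)\setminus\a_0$ as required.

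The only substantive point is the verification that Theorem \ref{primi} applies, which is essentially automatic once one recognizes that uniseriality over an abelian Lie algebra coincides with uniseriality over its symmetric algebra. No delicate calculation is needed; the normality (all coefficients nonzero) guaranteed by Theorem \ref{primi} is precisely what is needed to ensure $y\notin\a_0$, and that is the only subtlety in extracting the statement from the theorem.
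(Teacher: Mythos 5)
Your proof is correct and follows the same route as the paper, whose own proof is just the two cases you give: take $y=x$ when $\a_0=0$, and otherwise invoke Theorem \ref{primi} for the abelian algebra $\langle x\rangle\oplus\a_0$ (available since (A3) then makes $F$ infinite and perfect), with normality of the linear combination guaranteeing the coefficient of $x$ is non-zero, hence $y\notin\a_0$. Your write-up merely makes explicit the identification of uniserial $\g/\n$-modules with uniserial modules over the polynomial algebra on a basis of $\langle x\rangle\oplus\a_0$, which the paper leaves implicit.
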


\begin{proof} If $\a_0=0$ take $y=x$. If $F$ is an infinite perfect field
use Theorem \ref{primi}.
\end{proof}

Clearly $\g=\langle y\rangle\ltimes \a$. Since $\ad\, a=0$ for all $a\in\a_0$,
we see that $\ad\, y$ acts diagonalizably on $\a$ and has non-zero eigenvalues on $\n$.

By Lemma \ref{r1}, there exist $m$ monic irreducible polynomials $p_1,\dots,p_m\in F[X]$
such that
\begin{equation}
\label{expo}
U_1\cong F[X]/p_1^{s_1},\dots,U_m\cong F[X]/p_m^{s_m},
\end{equation}
as $F[X]$-modules. Given $\ga\in F$ and $1\leq i\leq m$, we can make $U_i$
into an $F[X]$-module, denoted by $U_i^\ga$, via
$$
g(X)\cdot w=g(X+\ga)w,\quad w\in U_i.
$$
For $\ga\in F$, we further let
$$
\n(\ga)=\{v\in\n\,|\, [y,v]=\ga v\}.
$$
The fact that $f^2,\dots,f^m$ are homomorphisms of $\g$-modules implies that
\begin{equation}
\label{gahom} f^{i}_v:U_i^\ga\to U_{i-1},\quad 2\leq i\leq m,\ga\in F,v\in\n(\ga),
\end{equation}
is a homomorphism of $F[X]$-modules.

\begin{lemma}\label{r2} Corresponding to any $2\leq i\leq m$ there exist $\ga_i\in F$, $\ga_i\neq 0$,
and $v_i\in \n(\ga_i)$ such that $f^i_v$ is injective.
\end{lemma}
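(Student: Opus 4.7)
The idea is to apply \eqref{nocero} to a carefully chosen lift $u \in U^{(i)}$, namely one whose image in $U_i$ lies in the socle; uniseriality will then upgrade the resulting non-vanishing of some $f^i_v(\bar u)$ to the desired injectivity of $f^i_v$.

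By Lemma \ref{r1}, each $U_i$ is uniserial as an $F[X]$-module via the action of $y$. For any $\gamma \in F$, the $F[X]$-action on $U_i^\gamma$ sends $X$ to $y|_{U_i} + \gamma\cdot\mathrm{id}$, which generates the same $F$-subalgebra of $\End_F(U_i)$ as $y|_{U_i}$ itself; thus $U_i$ and $U_i^\gamma$ share the same submodule lattice, so $U_i^\gamma$ is uniserial with socle equal to $\soc(U_i)$. Consequently, any $F[X]$-homomorphism $U_i^\gamma \to U_{i-1}$ that does not vanish on $\soc(U_i)$ is automatically injective: its kernel is a submodule of the uniserial module $U_i^\gamma$, and any nonzero submodule of $U_i^\gamma$ contains $\soc(U_i)$.

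Let $S \subseteq U^{(i)}$ be the full preimage of $\soc(U_i)$ under the projection $U^{(i)} \twoheadrightarrow U_i$. Since $\soc(U_i) \neq 0$ we have $U^{(i-1)} \subsetneq S$, so we may pick $u \in S \setminus U^{(i-1)}$. Applying \eqref{nocero} to this $u$ yields $v \in \n$ with $f^i_v(\bar u) \neq 0$, where $\bar u = u + U^{(i-1)}$ now lies inside $\soc(U_i)$. Decomposing $v = \sum_\gamma v_\gamma$ with $v_\gamma \in \n(\gamma)$, the sum running over the nonzero eigenvalues of $\ad\, y$ on $\a$, we obtain $\sum_\gamma f^i_{v_\gamma}(\bar u) \neq 0$, so some nonzero $\gamma$ satisfies $f^i_{v_\gamma}(\bar u) \neq 0$. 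Setting $\gamma_i := \gamma$ and $v_i := v_\gamma$, the homomorphism $f^i_{v_i}: U_i^{\gamma_i} \to U_{i-1}$ is nonzero on $\soc(U_i^{\gamma_i})$, hence injective by the observation of the previous paragraph.

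The only non-routine point in the argument is the choice of lift $u$: applying \eqref{nocero} to a generic $u \in U^{(i)} \setminus U^{(i-1)}$ would merely guarantee non-vanishing of $f^i_v$ on the specific class $\bar u$, which does not by itself rule out a nontrivial kernel further up in $U_i^{\gamma}$. Forcing $\bar u$ to lie in the socle is precisely what allows uniseriality to bridge the gap from non-vanishing to injectivity.
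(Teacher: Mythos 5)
Your proof is correct and takes essentially the same route as the paper's: both rely on $U_i^{\gamma}$ being uniserial with the same socle as $U_i$, on the fact that an $F[X]$-homomorphism out of a uniserial module is injective as soon as it is nonzero on the socle, and on applying (\ref{nocero}) to a lift of a nonzero socle element together with the eigenspace decomposition of $\n$ under $\ad\,y$ (whose eigenvalues are all nonzero, which is what gives $\gamma_i\neq 0$). The only difference is presentational: the paper argues contrapositively (if every $f^i_v$ failed to be injective, every kernel would contain $\soc(U_i)$ and (\ref{nocero}) would be violated), whereas you argue directly.
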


\begin{proof} It is clear that $U_i$ is a uniserial $F[X]$-module with irreducible socle
$$
\mathrm{soc}(U_i)=\{u\in U_i\,|\, p_i(X)^{s_i-1}u=0\}.
$$
This is exactly the socle of $U_i^\ga$ for every $\ga\in F$. Suppose, if possible, that for every $\ga\in F$ and every $v\in\n(\ga)$, the
$F[X]$-homomorphism $f^i_v:U_i^\ga\to U_{i-1}$ fails to be injective. Then $\mathrm{soc}(U_i)$ is contained in the kernel of $f^i_v:U_i^\ga\to U_{i-1}$ for every $\ga\in F$ and $v\in\n(\ga)$. Since $\ad\, y$ acts diagonalizably on $\n$, condition (\ref{nocero}) is violated. Given that 0 is not an eigenvalue of $\ad\, y$ on $\n$, the result follows.
\end{proof}

It will be convenient now to adopt a matrix point of
view. Let $\B=\B_1\cup\cdots\cup\B_m$ be a basis of $U$ such that
$\B_1\cup\cdots\cup\B_i$ is a basis of $U^{(i)}$ for all $1\leq i\leq m$. 
Then,
relative to $\B$, every element $z$ of $\g$ is represented by a
block upper triangular matrix $M(z)$, with diagonal blocks of
sizes $ds_1,\dots,ds_m$, where $d=\deg(p_1)$. Moreover, the
diagonal blocks of $M(y)$ are $C_1,\dots,C_m$, where $C_i$ is the
companion matrix of~$p_i^{s_i}$, $1\leq i\leq m$, and the diagonal
blocks of $M(v)$ are equal to 0 for every $v\in\n$. Furthermore,
given any $2\leq i\leq m$, there is an eigenvector $v_i$ of $\ad\, y$
in $\n$ such that block $(i-1,i)$ of $M(v_i)$ has nullity 0.

\begin{lemma}\label{r3} Let $v=v_2$. Then $f^2_v,\dots,f^m_v$
are injective.
\end{lemma}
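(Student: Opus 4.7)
The plan is to proceed by induction on $i$. The base case $i=2$ holds by definition, since $v = v_2$ was chosen in Lemma \ref{r2} precisely so that $f^2_{v_2}$ is injective.

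For the inductive step, suppose $i \geq 3$ and $f^{i-1}_v$ is injective. Invoke Lemma \ref{r2} once more at level $i$ to obtain some eigenvector $v_i \in \n$ of $\ad\, y$ for which $f^i_{v_i}$ is injective. The decisive observation is that since $\a$ is abelian, $v$ and $v_i$ commute as operators on $U$. Hence for any $u \in U^{(i)}$,
\[
(f^{i-1}_v \circ f^i_{v_i})(u + U^{(i-1)}) = v v_i u + U^{(i-3)} = v_i v u + U^{(i-3)} = (f^{i-1}_{v_i} \circ f^i_v)(u + U^{(i-1)}),
\]
so the two set maps $f^{i-1}_v \circ f^i_{v_i}$ and $f^{i-1}_{v_i} \circ f^i_v$ coincide as maps $U_i \to U_{i-2}$.

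Now the left-hand composition is injective, being a composition of two injections (namely $f^{i-1}_v$ by the inductive hypothesis and $f^i_{v_i}$ by the choice of $v_i$). Therefore the right-hand composition is also injective, and an injective composition forces its rightmost factor $f^i_v$ to be injective, completing the induction. There is no real obstacle here: the only conceptual step is to recognize that the commutativity of $\a$ allows one to swap the roles of $v$ and $v_i$ in the composition, thereby transferring the ``level $i$'' injectivity from the auxiliary $v_i$ to our fixed $v$.
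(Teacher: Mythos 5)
Your proof is correct and rests on exactly the same idea as the paper's: use Lemma \ref{r2} to get an auxiliary eigenvector $v_i$ that is injective at level $i$, and exploit the commutativity of the abelian algebra $\a$ to swap $v$ and $v_i$ in the composition $U_i\to U_{i-2}$, transferring injectivity to $f^i_v$. The paper presents this as a minimal-counterexample computation with the superdiagonal blocks of the commuting matrices $M(v)$ and $M(u)$, while you run the same argument as an induction directly with the maps $f^i$; the two are equivalent.
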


\begin{proof} Suppose, if possible, that this fails. Let $L_2,\dots,L_{m}$ be
the blocks along the first superdiagonal of $M(v)$. Then there is
$2\leq j\leq m-1$ such that $L_2,\dots,L_j$ has nullity 0 but
$L_{j+1}$ has nullity $>0$. By above, there is $u\in\n$ such that
block $(j,j+1)$ of $M(u)$ has nullity 0. Let $Q_2,\dots,Q_{m}$ be
the blocks along the first superdiagonal of $M(u)$, so that
$Q_{j+1}$ has nullity 0. Then block $(j,j+2)$ of $M(v)M(u)$ is
$L_jQ_{j+1}$, which has nullity 0, while block $(j,j+2)$ of
$M(u)M(v)$ is $Q_jL_{j+1}$, which has nullity $>0$. This
contradicts the fact that $M(v)M(u)=M(u)M(v)$.
\end{proof}

We have $[y,v]=\ga v$ for a unique $\ga\in F$, $\ga\neq 0$. We henceforth replace $y$ by $\gamma^{-1}y$.

\begin{lemma}\label{r4} The polynomials $p_1,\dots,p_m$ are related by
\begin{equation}
\label{polu} p_j(X)=p_{1}(X+j-1),\quad 1\leq j\leq m,
\end{equation}
and their exponents in (\ref{expo}) satisfy
$$
s_1\geq s_2\geq\cdots\geq s_m.
$$
\end{lemma}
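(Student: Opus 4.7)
The plan is to exploit the injectivity of the $F[X]$-homomorphisms $f^i_v$ supplied by Lemma \ref{r3} and extract divisibility relations between the annihilators of the cyclic $F[X]$-modules $U_i^1$ and $U_{i-1}$.

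The first step I would take is to identify $U_i^\gamma$ as an $F[X]$-module. Since $U_i\cong F[X]/(p_i(X)^{s_i})$ with $X$ acting as $y$, on $U_i^\gamma$ the indeterminate $X$ acts as $y+\gamma I$. A short computation shows that the annihilator in $U_i^\gamma$ of any generator of $U_i$ is the ideal $\{g\in F[X]: p_i^{s_i}\mid g(X+\gamma)\}=(p_i(X-\gamma)^{s_i})$, so
\[
U_i^\gamma \;\cong\; F[X]/(p_i(X-\gamma)^{s_i}),
\]
and in particular $U_i^1\cong F[X]/(p_i(X-1)^{s_i})$.

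Next, after the rescaling of $y$ performed just before the lemma, we have $v\in\n(1)$, so (\ref{gahom}) says that each $f^i_v\colon U_i^1\to U_{i-1}$ is a homomorphism of $F[X]$-modules, and Lemma \ref{r3} says that each of these is injective. I would then apply the standard divisibility criterion: an injective $F[X]$-module homomorphism $F[X]/(a)\hookrightarrow F[X]/(b)$ exists if and only if $a\mid b$. Applied here this yields
\[
p_i(X-1)^{s_i} \ \big|\ p_{i-1}(X)^{s_{i-1}}, \qquad 2\leq i\leq m.
\]

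Finally, since $p_i(X-1)$ and $p_{i-1}(X)$ are both monic and irreducible in $F[X]$, the above divisibility forces $p_{i-1}(X)=p_i(X-1)$ together with $s_i\leq s_{i-1}$. The identity $p_i(X)=p_{i-1}(X+1)$ (valid for all $2\leq i\leq m$), combined with a trivial induction on $i$, yields (\ref{polu}); concatenating the inequalities on exponents gives the claimed monotonicity $s_1\geq\cdots\geq s_m$. The only real obstacle I foresee is keeping the sign conventions straight in the identification of $U_i^\gamma$; after that, everything reduces to the divisibility criterion for cyclic torsion modules over a PID.
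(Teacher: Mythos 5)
Your proof is correct and follows essentially the same route as the paper: identify $U_i^1\cong F[X]/(p_i(X-1)^{s_i})$ and use the injectivity of the $F[X]$-homomorphisms $f^i_v$ (with $v\in\n(1)$ after the rescaling) to compare annihilators. The only cosmetic difference is that you extract the divisibility $p_i(X-1)^{s_i}\mid p_{i-1}(X)^{s_{i-1}}$ in one step, whereas the paper first uses nonvanishing of $f^i_v$ to equate the irreducible polynomials and then injectivity to compare the exponents.
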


\begin{proof} Clearly
$$U_i^1\cong F[X]/p_{i}(X-1)^{s_i},\quad 2\leq i\leq m.$$
Since $f^i_v:U_i^1\to U_{i-1}$ is a non-zero $F[X]$-homomorphism, we have
$$p_{i}(X-1)=p_{i-1}(X),\quad 2\leq i\leq m.$$
Since $f^i_v$ is, in fact, injective, $s_{i-1}\geq s_i$ for all $2\leq i\leq m$.
\end{proof}

\begin{lemma}\label{r5} Let ${\mathcal P}$ be the set consisting of all distinct polynomials amongst $p_1,\dots,p_m$.
Then

(1) If $\chr(F)=0$ then $|{\mathcal P}|=m$, i.e., $p_1,\dots,p_m$ are
all distinct from each other.

(2) Suppose $F$ has prime characteristic $p$ and restrict to $\Z_p^+$ the action (\ref{actf}) of $F^+$ on $F[X]$. Then ${\mathcal P}$ is $\Z_p^+$-stable and the action of $\Z_p^+$ on ${\mathcal P}$ is transitive. Moreover, each stabilizer is trivial or $\Z_p^+$.
In the latter case ${\mathcal P}$ is a singleton and in the former $|{\mathcal P}|=p$, with
$p_i=p_j\Leftrightarrow i\equiv j\mod p$. In particular, if $p\geq m$ and $|{\mathcal P}|>1$ then $p_1,\dots,p_m$ are
all distinct from each other.
\end{lemma}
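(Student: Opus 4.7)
My plan is to reduce everything to the relation $p_j(X) = p_1(X + j - 1)$ supplied by Lemma \ref{r4}: the assertion then becomes a question about when two integer shifts of a fixed monic irreducible polynomial coincide in $F[X]$.

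For part~(1), suppose $\chr(F) = 0$. If $p_i = p_j$ for some $1 \le i < j \le m$, setting $c = j - i > 0$ yields $p_1(X) = p_1(X + c)$. Writing $p_1 = X^d + a_{d-1}X^{d-1} + \cdots$ with $d = \deg p_1 \ge 1$, the coefficient of $X^{d-1}$ in $p_1(X + c)$ is $a_{d-1} + dc$, so equating forces $dc = 0$ in $F$. In characteristic $0$ this forces $c = 0$, a contradiction; hence all $p_j$ are distinct and $|\mathcal P| = m$.

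For part~(2) with $\chr(F) = p$, I would argue as follows. The stabilizer $H$ of $p_1$ in $\Z_p^+$ is a subgroup of a group of prime order, so $H = \{0\}$ or $H = \Z_p^+$; because $\Z_p^+$ is abelian, every element of the $\Z_p^+$-orbit of $p_1$ shares the same stabilizer. If $H = \Z_p^+$, then $p_1(X + 1) = p_1$, so by iteration $p_1(X + k) = p_1$ for every $k \in \Z_p$, giving $p_j = p_1$ for all $j$ and $\mathcal P = \{p_1\}$. If $H = \{0\}$, then $k \mapsto p_1(X+k)$ is an injection $\Z_p \hookrightarrow F[X]$, and so $p_i = p_j \iff i - 1 \equiv j - 1 \pmod p$; the $\Z_p^+$-orbit of $p_1$ then has exactly $p$ elements.

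The ``in particular'' clause follows immediately: if $p \ge m$ and $|\mathcal P| > 1$ we cannot be in the $H = \Z_p^+$ case, so $H = \{0\}$, and for $i, j \in \{1, \dots, m\}$ with $m \le p$ the congruence $i \equiv j \pmod p$ forces $i = j$, so the $p_j$ are pairwise distinct. The subtlety I anticipate is the claim, in the trivial-stabilizer branch, that $\mathcal P$ is $\Z_p^+$-stable and has exactly $p$ elements: this amounts to $\{0, 1, \dots, m-1\}$ surjecting onto $\Z_p$, i.e., $m \ge p$. This does not follow from the shift relation alone, and I expect this is where additional module-theoretic input is required (perhaps exploiting the injectivity of the chain $f^2_v, \dots, f^m_v$ from Lemma \ref{r3} together with the characteristic-$p$ periodicity of the $p_j$); this is the step I would spend the most time on.
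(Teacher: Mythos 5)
Your argument is essentially the paper's: the printed proof of this lemma is the single line ``This is clear from (\ref{polu})'', and what you wrote is precisely the fleshing-out of that line --- part (1) by comparing the coefficient of $X^{d-1}$ in $p_1(X)$ and $p_1(X+c)$, part (2) by the subgroup dichotomy in the prime-order group $\Z_p^+$ together with $p_i=p_j\Leftrightarrow i\equiv j\pmod p$ in the trivial-stabilizer case, from which the ``in particular'' clause follows. So the core of your proof is correct and complete as far as the paper's own argument goes.

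Concerning the step you flagged: do not look for module-theoretic input, because none will supply it. Read literally, the clauses ``$\mathcal P$ is $\Z_p^+$-stable'' and ``$|\mathcal P|=p$'' in the trivial-stabilizer case hold only when $m\geq p$, since $|\mathcal P|\leq m$ by definition; and Theorem \ref{falta} itself produces linked modules with $m<p$ and $p_1$ of trivial stabilizer (e.g.\ $m=2$, $p_1=X$ over $\mathbb F_5$), for which $\mathcal P=\{p_1,p_1(X+1)\}$ is neither $\Z_p^+$-stable nor of size $p$. So Lemma \ref{r3} cannot rescue the literal statement; those clauses should be understood as describing the $\Z_p^+$-orbit of $p_1$ rather than the set $\mathcal P$ of polynomials actually occurring among $p_1,\dots,p_m$. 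The part of the lemma that is used afterwards --- the equivalence $p_i=p_j\Leftrightarrow i\equiv j\pmod p$ under a trivial stabilizer, hence the ``in particular'' conclusion under hypothesis (A4) --- is exactly what you established, so your proof covers everything the paper needs.
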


\begin{proof} This is clear from (\ref{polu}).
\end{proof}


We make the following assumption from now on.

\medskip

{\sc (A4)} If $F$ has prime characteristic $p$ then $p\geq m$ and the stabilizer of $p_1$ under the action (\ref{actf}) of $F^+$
on $F[X]$ is trivial.

\medskip

We infer from (A4) and Lemma \ref{r5} that $p_1,\dots,p_m$ are distinct from each other.

\begin{lemma}\label{r6} There is a basis of $U$ relative to which $y$ is represented by
$\NA$, as in (\ref{defa}), and $v$ is represented by
$$
\NT'=\left(
    \begin{array}{ccccc}
      0 & L_2 & 0 & \dots & 0 \\
      0 & 0 & L_3 & \ddots & \vdots\\
      \vdots & \ddots & \ddots & \ddots & 0 \\
      \vdots &  & \ddots & \ddots & L_{m}\\
      0 & \dots & \dots & 0 & 0 \\
    \end{array}
  \right),
$$
where $L_i$ has nullity 0 and satisfies $L_i(C_i+I)=C_{i-1}L_i$.
\end{lemma}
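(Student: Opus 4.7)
The plan is in two stages: first, use Lemma \ref{up} to normalize the matrix of $y$; second, use the commutation relation $[y,v]=v$ together with Corollary \ref{eigen2} to constrain the matrix of $v$.

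I will start by fixing a basis $\mathcal{B}$ of $U$ adapted to the filtration $U^{(0)}\subset\cdots\subset U^{(m)}$ and to the $F[X]$-module structure (\ref{expo}) on each quotient, so that $M(y)$ is block upper triangular with diagonal blocks $C_1,\ldots,C_m$ (the companion matrices of $p_i^{s_i}$) and $M(v)$ is block upper triangular with zero diagonal blocks. The key input from Lemma \ref{r4} combined with assumption (A4) is that $p_1,\ldots,p_m$ are pairwise distinct monic irreducibles; hence the minimal polynomials $p_i^{s_i}$ of the diagonal blocks of $M(y)$ are pairwise coprime. Lemma \ref{up} then yields a block upper triangular $P$ with identity diagonal blocks such that $P^{-1}M(y)P=\NA$. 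After changing basis by $P$ we may therefore assume $M(y)=\NA$.

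Next I will translate $[y,v]=v$ into the matrix identity $\NA\, M(v)=M(v)(\NA+I)$ and examine it block by block: each block $L_{ij}$ of $M(v)$ with $i<j$ must satisfy $C_i L_{ij}=L_{ij}(C_j+I)$. A short calculation using $p_j(X)=p_1(X+j-1)$ identifies $\mu_{C_j+I}$ with $p_{j-1}^{s_j}$; since the $p_i$ are pairwise distinct, this is coprime to $\mu_{C_i}=p_i^{s_i}$ whenever $j>i+1$, and Corollary \ref{eigen2} then forces $L_{ij}=0$. This already produces the desired shape $\NT'$, and the relation $L_i(C_i+I)=C_{i-1}L_i$ is nothing but the surviving intertwining identity along the first superdiagonal.

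The remaining point is that the superdiagonal blocks $L_i$ still have nullity zero after the conjugation by $P$. Since $P$ and $P^{-1}$ are block upper triangular with identity diagonal blocks, block $(i-1,i)$ of $M(v)$ is unchanged by the conjugation; hence it continues to represent the $F[X]$-homomorphism $f^i_v:U_i\to U_{i-1}$, which is injective by Lemma \ref{r3}. The main obstacle I anticipate is precisely this bookkeeping: one must be sure that the normalization tailored to $y$ does not spoil the injectivity information about $v$ secured earlier. The block upper triangular shape of $P$ with identity diagonal blocks is what makes the compatibility automatic.
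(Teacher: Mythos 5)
Your proof is correct and follows essentially the same route as the paper's (very terse) argument: Lemma \ref{up} with the pairwise coprimality of the $p_i^{s_i}$ to block-diagonalize $y$, the relation $[y,v]=v$ together with Corollary \ref{eigen2} to kill all blocks of $M(v)$ off the first superdiagonal, and Lemma \ref{r3} for the nullity-zero claim. Your explicit check that conjugation by a block upper triangular $P$ with identity diagonal blocks leaves the first superdiagonal blocks of the strictly block upper triangular $M(v)$ unchanged is precisely the bookkeeping the paper leaves implicit, and it is done correctly.
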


\begin{proof} Since $p_1,\dots,p_m$ are all distinct, this follows from Lemmas \ref{up}
and \ref{r3}.
\end{proof}

\begin{lemma}\label{r7} There is a basis of $U$ relative to which $y$ is represented by
$\NA$, as in (\ref{defa}), and $v$ is represented by $\NT$, as in (\ref{deft}).
\end{lemma}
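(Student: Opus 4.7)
The plan is to start from the form given by Lemma~\ref{r6} and conjugate by a block diagonal matrix $P=\mathrm{diag}(P_1,\dots,P_m)$ (with the same block sizes as $\NA$) satisfying $P_iC_i=C_iP_i$ for every $i$; this preserves $\NA$ and replaces each superdiagonal block $L_i$ of $\NT'$ by $P_{i-1}^{-1}L_iP_i$. Since the minimal polynomial of $C_i$ is $p_i^{s_i}$ and equals its characteristic polynomial, the centralizer of $C_i$ in $M_{ds_i}(F)$ is $F[C_i]$, so $P_i=q_i(C_i)$ for some $q_i\in F[X]$, invertible if and only if $\gcd(q_i,p_i)=1$. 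I identify $F^{ds_i}$ with $F[X]/(p_i^{s_i})$ via the basis $\B_i$, so that $C_i$ acts as multiplication by $X$ and $P_i$ as multiplication by $q_i(X)$.

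Next I decode each $L_i$. The relation $L_i(C_i+I)=C_{i-1}L_i$ says $L_i((X+1)u)=X\,L_i(u)$, so for every $r\in F[X]$ one has $L_i(r(X))=r(X-1)\,h_i(X)$, where $h_i(X):=L_i(1)\in F[X]/(p_{i-1}^{s_{i-1}})$. Using (\ref{polu}) in the form $p_i(X-1)=p_{i-1}(X)$, well-definedness of $L_i$ on $F[X]/(p_i^{s_i})$ forces $p_{i-1}^{s_{i-1}-s_i}\mid h_i$; writing $h_i=p_{i-1}^{s_{i-1}-s_i}g_i$, the nullity~$0$ hypothesis forces $\gcd(g_i,p_{i-1})=1$. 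The matrix $B_i=S(-1,p_{i-1},s_{i-1},s_i)$ from Lemma~\ref{gam} is precisely the $L_i$ one obtains when $g_i=1$.

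Finally I fix the $q_i$ inductively. The requirement $P_{i-1}^{-1}L_iP_i=B_i$, evaluated at $1$, becomes
\[
q_{i-1}(X)^{-1}\,q_i(X-1)\,p_{i-1}(X)^{s_{i-1}-s_i}\,g_i(X)\equiv p_{i-1}(X)^{s_{i-1}-s_i}\pmod{p_{i-1}(X)^{s_{i-1}}},
\]
which simplifies to $q_i(X-1)\,g_i(X)\equiv q_{i-1}(X)\pmod{p_{i-1}(X)^{s_i}}$. Starting from $q_1=1$, and assuming inductively that $q_{i-1}$ is coprime to $p_{i-1}$, the product $q_{i-1}g_i^{-1}$ makes sense modulo $p_{i-1}^{s_i}$; letting $r(X)$ be a representative and setting $q_i(X):=r(X+1)$, the identity $p_{i-1}(X+1)=p_i(X)$ gives $q_i$ modulo $p_i^{s_i}$, coprime to $p_i$, so $P_i$ is invertible. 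Because the stage $i$ modification affects only the blocks $L_i$ and $L_{i+1}$, the previously normalized blocks $L_2,\dots,L_{i-1}$ remain equal to $B_2,\dots,B_{i-1}$, and the induction closes. The main delicate point is the bookkeeping of the twist $X\mapsto X+1$ together with $p_{i-1}(X+1)=p_i(X)$; once this is tracked carefully, the construction of each $q_i$ reduces to inverting a unit in $F[X]/(p_{i-1}^{s_i})$.
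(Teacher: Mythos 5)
Your proof is correct and takes essentially the same route as the paper: both conjugate by a block-diagonal matrix whose blocks are invertible polynomials $q_i(C_i)$ in the companion matrices, constructed inductively while tracking the shift $X\mapsto X+1$ via $p_{i-1}(X+1)=p_i(X)$. The only cosmetic difference is that the paper factors $L_i=B_i\,q_i(C_i)$ directly by comparing the two $F[X]$-module monomorphisms $F[X]/(p_{i-1}^{s_i})\to F[X]/(p_{i-1}^{s_{i-1}})$, whereas you parametrize the intertwiners by their value at $1$ and solve the resulting congruence modulo $p_{i-1}^{s_i}$, which amounts to the same inductive computation.
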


\begin{proof} Lemma \ref{r6} and (\ref{bsat}) imply that $B_i$ and $L_i$ both represent a
monomorphism of $F[X]$-modules
$$F[X]/p_{i-1}^{s_i}\to F[X]/p_{i-1}^{s_{i-1}},\quad 2\leq i\leq m.$$ It follows that  $L_i=B_iq_i(C_i)$ for
some $q_i\in F[X]$ satisfying $\gcd(q_i,p_i)=1$. We claim that
there exist $g_1,\dots,g_m\in F[X]$ such that
$$
P=\left(
             \begin{array}{cccc}
               g_1(C_1) & 0 & \dots & 0 \\
               0 & g_1(C_2) & \ddots & \vdots \\
               \vdots & \ddots & \ddots & 0 \\
               0 & \dots & 0 & g_m(C_m) \\
             \end{array}
           \right),\quad \gcd(g_i,p_i)=1,
$$
and
$$
P \NT' P^{-1}=\NT.
$$
This means
$$
g_i(C_i)B_{i+1}q_{i+1}(C_{i+1})g_{i+1}(C_{i+1})^{-1}=B_{i+1},\quad
1\leq i<m.
$$
Start by setting $g_1=1$, $h_2=1$ and $g_2=h_2q_2$. By
(\ref{bsat})
$$
g_2(C_2)B_3=B_3g_2(C_3+I)=B_3h_3(C_3),
$$
where $h_3(X)=g_2(X+1)$ is relatively prime to $p_3(X)=p_2(X+1)$.
Next set $g_3=h_3q_3$, and repeat. This proves the claim.
Obviously, we also have $P\NA P^{-1}=\NA$.
\end{proof}

We are finally in a position to classify all linked $\g$-modules.
Since a linked $\g$-module annihilated by $\n$ is nothing but a
uniserial module over the abelian Lie algebra $\g/\n$, a case which was considered in \cite{CS2},
we may restrict our attention to linked
$\g$-modules of nilpotency length $m>1$.

\begin{theorem}\label{main} Consider the Lie algebra $\g=\langle x\rangle\ltimes \a$,
where $\ad\, x$ acts diagonalizably on the abelian Lie algebra
$\a$. Set $\n=[\g,\g]$, namely the sum of
all eigenspaces of $\ad\, x$ in $\a$ with non-zero eigenvalue, and
let $\a_0$ the the 0-eigenspace of $\ad\, x$ in $\a$.

Suppose that $F$ is an infinite perfect field or $\a_0=0$, and let $U$ be a
linked $\g$-module of nilpotency length $m>1$. Then

\medskip

(1) There is $y\in(\langle x\rangle\oplus\a_0)\setminus\a_0$
such that all the nilpotency factors $$U^{(1)}/U^{(0)},\dots,U^{(m)}/U^{(m-1)}$$
of $U$ are uniserial $F[X]$-modules via the action of $y$, and if $p_1,\dots,p_m\in F[X]$ are the monic irreducible
polynomials whose powers $p_1^{s_1},\dots,p_m^{s_m}$ satisfy
$$
U^{(i)}/U^{(i-1)}\cong F[X]/(p_i)^{s_i},\quad 1\leq i\leq m,
$$
as $F[X]$-modules via $y$, then $p_1,\dots,p_m$ have common degree $d$, are related by
$$
p_i(X)=p_1(X+i-1),\quad 1\leq i\leq m,
$$
and the exponents $s_1,\dots,s_m$ satisfy
$$
s_1\geq\cdots\geq s_m.
$$
Assume that, if $F$ has prime characteristic $p$, then $p\geq m$ and the stabilizer of $p_1$ under the action (\ref{actf}) of $F^+$
on $F[X]$ is trivial.

\medskip

(2) For $\delta\in F$ let
$$
\a(\delta)=\{u\in \a\,|\, [y,u]=\delta u\}
$$
and set
$$
S=\{\delta\in F\,|\, \a(\delta)U\neq 0\}.
$$
Then $1\in S$ and every $j\in S$ is an integer satisfying $0\leq
j<m$.

\medskip

(3) There is basis of $U$ such that:

\noindent $\bullet$ $y$ is represented by $\NA$ as in
{\rm (\ref{defa})}.

\noindent $\bullet$ There is $v\in\n$ such that $v\neq 0$,
$[y,v]=v$ and $v$ is represented by $\NT$ as in {\rm (\ref{deft})}.

\noindent $\bullet$ Corresponding to every integer $j$ satisfying $0\leq j<m$ there is linear
map
$$
g^j:\a(j)\to F[X]_{ds_{j+1}}
$$
such that $g^1_v=1$ and every $u\in \a(j)$ is represented by $T^j
g^j_u(\NE)$, with $\NE$ as in {\rm (\ref{defeg})}. In particular,  the
image of $\g$ in $\gl(t)$, where
$$
t=d(s_1+\cdots+s_m),
$$
is
\begin{equation}
\label{imagen} \langle A\rangle\ltimes(\E_0\oplus\E_1
\NT\oplus\cdots\oplus \E_{m-1} \NT^{m-1}),
\end{equation}
where
$$
\E_j=\{g^j_u(\NE)\,|\, u\in\n(j)\},\quad 0\leq j\leq m.
$$

\medskip

(4) $U$ is uniserial if and only if $s_1=\cdots=s_m=1$, in which
case $y$ must act semisimply on $U$, and hence so does $x$ if $\a_0=0$ or $d=1$.
\end{theorem}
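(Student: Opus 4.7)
The plan is to assemble the preceding lemmas into the four-part statement; the real content of the theorem has already been extracted in Lemmas~\ref{r1}--\ref{r7} and Proposition~\ref{310}, and what remains is largely bookkeeping.

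Part~(1) is immediate from Lemma~\ref{r1} (which produces $y$) and Lemma~\ref{r4} (the shift relation and exponent monotonicity, available after the rescaling $y\mapsto \gamma^{-1}y$ that normalizes $[y,v]=v$); the common degree $d$ is read off the shift relation. For Part~(2), I would fix the basis supplied by Lemma~\ref{r7}, so that $y$ is represented by $\NA$ and a distinguished $v\in\n(1)$ by $\NT$. Given $\delta\in S$ and a nonzero $u\in\a(\delta)$, commutativity of $\a$ makes $u$ and $v$ commute as endomorphisms, so $u$ lies in the centralizer $\Ze$ of $\NT$ in $\gl(U)$, while simultaneously being a $\delta$-eigenvector of $\ad\,\NA$. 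Proposition~\ref{310} identifies $\Ze\cap\mathcal{S}$ with $\E\oplus \NT\E\oplus\cdots\oplus \NT^{m-1}\E$, whose $\ad\,\NA$-eigenvalues are exactly $0,1,\dots,m-1$; the existence of $v$ shows that $1\in S$.

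For Part~(3), the representations of $y$ and $v$ are given by Lemma~\ref{r7}. For an integer $j$ with $0\le j<m$ and any $u\in\a(j)$, the argument of Part~(2) combined with Proposition~\ref{310} yields a polynomial $g\in F[X]$ with $u=\NT^j g(\NE)$. Lemma~\ref{31} says the kernel of $g\mapsto \NT^j g(\NE)$ is $(p_1^{s_{j+1}})$, so there is a unique representative $g^j_u\in F[X]_{ds_{j+1}}$, and uniqueness forces $g^j$ to be $F$-linear in $u$. The normalization $g^1_v=1$ is forced by $v\leftrightarrow \NT=\NT\cdot 1(\NE)$, and the decomposition (\ref{imagen}) of the image of $\g$ in $\gl(t)$ follows at once.

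For Part~(4), the uniserial criterion is Proposition~\ref{39}. When all $s_i=1$, hypothesis (A4) together with Lemma~\ref{r5} makes $p_1,\dots,p_m$ distinct monic irreducibles, so the minimal polynomial of $\NA$ equals $p_1\cdots p_m$, which is squarefree; hence $y$ acts semisimply on $U$. If $\a_0=0$ then $y=\alpha x$ for some nonzero $\alpha\in F$, so $x$ is semisimple. If $d=1$ then each $C_i$ is a scalar block, so both $\NA$ and $\NE$ are diagonal; Part~(3) then makes every $u\in\a_0=\a(0)$ act by the diagonal matrix $g^0_u(\NE)$, and writing $y=\alpha x+a_0$ with $a_0\in\a_0$ gives that $x=\alpha^{-1}(y-a_0)$ is diagonal, hence semisimple. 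The only step with any residual subtlety is this last one: bootstrapping semisimplicity of $y$ to semisimplicity of $x$ in the $d=1$ case requires invoking Part~(3) to see that $\a_0$ acts diagonally in the same basis.
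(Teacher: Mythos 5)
Your proposal is correct and follows essentially the same route as the paper, which likewise assembles Lemmas \ref{r1}--\ref{r7} for (1) and the first two bullets of (3), derives (2) and the third bullet of (3) from Lemma \ref{31} and Proposition \ref{310} via the centralizer-of-$T$ argument, and gets (4) from Proposition \ref{39}. Your extra details (uniqueness of the representative in $F[X]_{ds_{j+1}}$ giving linearity of $g^j$, and the squarefree minimal polynomial plus the $d=1$ diagonal argument for semisimplicity of $x$) merely spell out steps the paper leaves implicit.
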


\begin{proof} The lemmas preceding the theorem justify (1) as well
as the first two assertions of (3), while (2) and the third assertion of (3)
follow from Lemma \ref{31} and Proposition \ref{310}.
Moreover, (4) is a consequence of Proposition \ref{39}.
\end{proof}

\begin{note}{\rm Combining Theorems \ref{falta} and \ref{main} yields a complete classification
of linked $\g$-modules when $\chr(F)=0$ and a partial classification when $F$ has prime characteristic.
Further information on the latter case can be found in \cite{CS4}.}
\end{note}

\begin{cor}\label{sinno} Keep the hypotheses and notation of Theorem \ref{main} and suppose, in addition,
that $U$ is faithful. Then {\em every} eigenvalue $j$ of $\ad\,y$
acting on $\a$ must an integer satisfying $0\leq j<m$, and the
multiplicity of $j$ is bounded above by $ds_{j+1}$.

Assume, furthermore, that $U$ is uniserial and that $p=X-\alpha$
for some $\alpha\in F$, which is necessarily the case if $F$ is
algebraically closed. Then all eigenvalues $\ad\, y$ acting on $\a$
must have multiplicity 1. Moreover, there is a basis of $U$
relative to which $y$ is represented by
$$
\NA=\left(
             \begin{array}{cccc}
               \alpha & 0 & \dots & 0 \\
               0 & \alpha-1 & \ddots & \vdots \\
               \vdots & \ddots & \ddots & 0 \\
               0 & \dots & 0 & \alpha-(m-1) \\
             \end{array}
           \right)\in\gl(m),
$$
some eigenvector $v\in\n$ of $\ad\, y$ with eigenvalue 1 is
represented by
$$
\NT=\left(
    \begin{array}{ccccc}
      0 & 1 & 0 & \dots & 0 \\
      0 & 0 & 1 & \ddots & \vdots\\
      \vdots & \ddots & \ddots & \ddots & 0 \\
      \vdots &  & \ddots & \ddots & 1\\
      0 & \dots & \dots & 0 & 0 \\
    \end{array}
  \right)\in\gl(m),
$$
and if $(v_j)_{j\in S}$ is a basis of $\a$ formed by eigenvectors
of $\ad\, y$, subject to $v_1=v$, then every $v_j$ is represented
by $\beta_j \NT^{j}$, where $0\neq\beta_j\in F$ and $\beta_1=1$.
Moreover, the isomorphism type of $U$ is uniquely determined by
$m,\alpha$ and $(\be_j)_{j\in S}$.
\end{cor}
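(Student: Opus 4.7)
The plan is to deduce everything directly from Theorem \ref{main}(3), once faithfulness is used to pull a statement about the image in $\gl(t)$ back to a statement about $\a$ itself. Since $U$ is faithful, the Lie algebra map $\g\to\gl(U)$ is injective, and so by Theorem \ref{main}(3) the restriction of this map to $\a(j)$ is an injection into $T^j\E_j\subseteq T^j\E$. Lemma \ref{31} says $\dim_F T^j\E=ds_{j+1}$ for $0\leq j<m$, which gives the multiplicity bound $\dim \a(j)\leq ds_{j+1}$ at once, and also forces $\a(j)=0$ for any $j\notin\{0,1,\dots,m-1\}$ (for such $j$ the space $T^j\E$ is $0$, interpreting $T^m=0$). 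This establishes the first two assertions.

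Next, specialize to the uniserial case with $p=p_1=X-\alpha$. Uniseriality (Theorem \ref{main}(4)) yields $s_1=\cdots=s_m=1$, and $d=\deg p=1$ gives $t=d(s_1+\cdots+s_m)=m$. Each companion matrix $C_i$ becomes the $1\times 1$ scalar $\bigl(\alpha-(i-1)\bigr)$, so $\NA$ as defined in (\ref{defa}) collapses to the diagonal matrix displayed in the corollary. The blocks of $\NE$ become $C_i+(i-1)I=(\alpha)$, so $\NE=\alpha I_m$. Finally, Lemma \ref{gam} gives $B_i=S(-1,p_{i-1},1,1)=(1)$, so $\NT$ is the standard nilpotent Jordan shift displayed in the corollary.

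Because $\NE=\alpha I_m$, the commutative subalgebra $\E=F[\NE]$ equals $F\cdot I_m$, hence $T^j\E=F\,\NT^j$ is one-dimensional. Combined with the multiplicity bound $\dim\a(j)\leq ds_{j+1}=1$ from step one, every $j\in S$ has $\dim\a(j)=1$, giving the multiplicity-1 statement. For each $j\in S$ choose the generator $v_j$ as in the corollary; by Theorem \ref{main}(3) it is represented by $\NT^j g^j_{v_j}(\NE)=\NT^j g^j_{v_j}(\alpha)$, so $v_j$ is represented by $\beta_j\NT^j$ for a unique $\beta_j\in F$, necessarily nonzero since $U$ is faithful and $v_j\neq 0$. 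The normalization $g^1_v=1$ from Theorem \ref{main}(3) forces $\beta_1=1$.

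For the final uniqueness claim, the plan is to invoke the last assertion of Theorem \ref{falta}: fixing the normalization $g^1_v=1$, any change to $m$, $p_1$, the exponents $s_i$, or the linear maps $g^j$ produces an inequivalent representation. In the present setting $p_1=X-\alpha$ is encoded by $\alpha$, the $s_i$ are all $1$, and since $\a(j)$ is one-dimensional with distinguished basis vector $v_j$, the linear map $g^j$ is determined by the single scalar $\beta_j$. Hence the isomorphism type of $U$ is completely pinned down by $m$, $\alpha$, and $(\beta_j)_{j\in S}$. No step requires a genuinely new argument; the only mild care needed is in verifying that faithfulness correctly converts the image-side description of Theorem \ref{main}(3) into bounds on $\a$, which is the step I would treat most explicitly.
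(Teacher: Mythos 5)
Your proposal is correct and follows essentially the route the paper intends: the corollary is stated without a separate proof precisely because it is, as you show, an immediate consequence of Theorem \ref{main} (parts (2)--(4)), Lemma \ref{31}, and the uniqueness statement of Theorem \ref{falta}, with faithfulness converting the image-side description into statements about $\a$ itself. Two cosmetic points: the exclusion of eigenvalues outside $\{0,\dots,m-1\}$ is cleaner via Theorem \ref{main}(2) (any eigenvalue of a faithful module lies in $S$) than via ``interpreting $T^{j}\E=0$'', and for $j=0$ the bound $\dim\E=ds_{1}$ comes from the remark $\E\cong F[X]/(p^{s_1})$ in Section \ref{trans} rather than Lemma \ref{31}, which covers $1\leq j<m$.
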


\begin{note}{\rm Corollary \ref{sinno} shows that not every Lie algebra has a
faithful uniserial module.}
\end{note}





\begin{note}\label{n5}{\rm If (A3) or (A4) do not hold then
the above classification of linked modules fails.

Indeed, suppose first that $F$ is finite or imperfect, and that
$\a=\a_0\neq 0$. Then $\g$ is abelian of dimension $>1$ and there
exists a a uniserial $\g$-module, and hence admissible, that is
not a uniserial $F[y]$-module for any $y\in\g$ (see \cite{CS3},
Theorem 2.7 and Note 3.5).

Suppose next that $F$ has prime characteristic $p$ and that $f\in
F[X]$ is an irreducible polynomial satisfying $f(X)=f(X+1)$. The
irreducible Artin-Schreier polynomials, as well as a generalized
version of them studied in \cite{GS}, satisfy this property. Let
$S=S(-1,f,1,1)$, in the notation of Lemma \ref{gam}, and set
$$M=\left(%
\begin{array}{cc}
  C_f & C_f \\
  0 & C_f \\
\end{array}%
\right),\; N=\left(%
\begin{array}{cc}
  0 & S \\
  0 & 0 \\
\end{array}%
\right).
$$
It is easy to verify that the minimal polynomial of $M$ is $f^2$
(see \cite{GS2}, Lemma 4.1, for details). Moreover, by Lemma
\ref{gam},
$$
[C_{f},S]=S,
$$
whence
$$
[M,N]=N.
$$
Let $\g=\langle x\rangle\ltimes \langle v\rangle$, where
$[x,v]=v$. Then $x\mapsto M$ and $v\mapsto N$ yields a faithful
uniserial $\g$-module $U$ upon which $v$ acts nilpotently.
However, the polynomials (\ref{polu}) associated to the action of
$x$ on the nilpotency factors of $U$ produce a single elementary
divisor for the action of $x$ on the whole of $U$. In particular,
$x$ does not act semisimply on $U$. These last two features are
impossible under the hypothesis of Theorem \ref{main}.

Even when $F$ is algebraically closed of prime characteristic
$p<m$, the above classification of linked modules of nilpotency
length $m$ still fails. Details, including a correct
classification, can be found in \cite{CS4}. }
\end{note}


\end{document}